\definecolor{refkey}{rgb}{1, 0.5, 0}  
\definecolor{labelkey}{rgb}{1,1,1}
\definecolor{labelkey}{rgb}{0.1,1,0.2}
\newtheorem{theorem}{Theorem}[section]
\newtheorem{lemma}{Lemma}[section]
\newtheorem{remark}{Remark}[section]
\newtheorem{corollary}{Corollary}[section]
\newtheorem{definition}{Definition}[section]
\def\bel{\begin{equation}\label}
\def\eeq{\end{equation}}
\def\bega{\begin{array}}
\def\enda{\end{array}}
\def\ve{\varepsilon}
\title{Traveling Wave Profiles for a Follow-the-Leader Model for Traffic Flow  with Rough Road Condition}
\author{Wen Shen\footnote{Mathematics Department, Pennsylvania State University, USA. 
wxs27@psu.edu}}
\begin{document}

\maketitle

\begin{abstract}
We study a Follow-the-Leader (FtL) ODE model for traffic flow with rough road
condition, and analyze stationary traveling wave profiles where the solutions of the FtL model
trace along, near the jump in the road condition.  
We derive a discontinuous delay differential equation (DDDE) for these profiles.
For various cases, we obtain results on existence, uniqueness and local stability 
of the profiles. 
The results here offer an alternative approximation, 
possibly more realistic than the classical vanishing viscosity approach, 
to the conservation law with discontinuous flux for traffic flow. 
\end{abstract}

\textbf{AMS Subject Classification:}
65M20, 35L65, 35L02, 34B99, 35Q99.

\section{Introduction}\setcounter{equation}{0}

We consider an ODE model for traffic flow with rough road condition.
Given an index $i\in\mathbb{Z}$ and a time $t\ge0$, 
let $z_i(t)$ be the position of car number $i$ at time $t$. 
Let $\ell$ be the length of all cars, so that 
\[ z_i (t)+\ell  \le z_{i+1}(t) , \qquad \forall t,i,\]
one defines a discrete local density $\rho_i(t)$ for each car with index $i$:
\bel{def-rho-i}\rho_i (t) \;\dot=\; \frac{\ell}{z_{i+1}(t)-z_i(t)}.
\eeq
By this normalized 
definition, the maximum car density is $\rho=1$ where cars are bumper-to-bumper.

The road condition includes many factors,  for example  the number of lanes,
quality of the road surface, surrounding situation, among other things. 
For simplicity of the discussion, we
let $k(x)$ be the speed limit which reflects the various road conditions.
We are particularly interested in the case where $k(x)$ is discontinuous. 

At time $t$, 
given a distribution of car positions $\{z_i(t)\}$, the speed of each car
is determined by its discrete local density and the road condition:
\bel{dotzi}
\dot z_i (t) = k(z_i(t)) \cdot \phi(\rho_i(t)).
\eeq
Here $\phi(\rho)$ is a decreasing function, with 
\bel{phi-prop}
 \phi' (\rho)\le -\hat c_0 <0, \qquad \phi(1)=0, \quad \phi(0)=1. 
 \eeq
For example, the popular 
Lighthill-Whitham model~\cite{MR0072606} uses, 
\begin{equation}\label{LW}
\phi(\rho)=1-\rho.
\end{equation}

The system of ODEs~\eqref{dotzi} describes the {\em Follow-the-Leader} behavior, 
and is referred to as the \textbf{FtL} model.
By simple computation we obtain
an equivalent system of ODEs for the local densities $\rho_i$:
\bel{dot-rho-i}
\dot\rho_i = \frac{\ell}{(z_{i+1}-z_i)^2} \Big[  \dot z_i - \dot z_{i+1}\Big]
= \frac{\rho_i^2}{\ell} \; 
\Big[ k(z_{i}) \phi(\rho_{i}) - k(z_{i+1}) \phi(\rho_{i+1}) \Big] .
\eeq
Note that given the set $\{\rho_i\}$, one can recover the set for the car positions 
$\{z_i\}$ by $z_{i+1}=z_i + \ell/\rho_i$.
The car position distribution $\{z_i\}$ is unique if we fix any car, 
say $z_0=0$. 

\bigskip

Let $\{z_i(t),\rho_i(t)\}$  denote the solution of the FtL model. 
We seek stationary profiles $Q(x)$ such that 
the points $\{z_i(t),\rho_i(t)\}$ trace along the graph of $Q(x)$. 
To be specific, we require
\bel{Q1}
 Q(z_i(t)) = \rho_i(t) \qquad \forall i,t.
 \eeq
Differentiating~\eqref{Q1} in $t$, and using~\eqref{dotzi} and~\eqref{dot-rho-i},
we obtain
\[
Q'(z_i) = \frac{\dot \rho_i}{\dot z_i} 
= \frac{\rho_i^2}{\ell\cdot k(z_i) \, \phi(\rho_i)} 
\Big[ k(z_i)\phi(\rho_i) - k(z_{i+1}) \phi(\rho_{i+1})\Big].
\]
Using
\[ 
z_{i+1}=z_i+\frac{\ell}{\rho_i}, \qquad \rho_{i}=Q(z_{i}),
\]
and writing $x$ for $z_i$ (since it is arbitrary), we get
\bel{DQ}
Q'(x) = \frac{Q(x)^2}{\ell \; k(x)\phi(Q(x))} \cdot \left[k(x)\phi(Q(x)) - 
k(x^\sharp) \phi(Q(x^\sharp) )\right],
\qquad 
x^\sharp=x+\frac{\ell}{Q(x)}.
\eeq
Here $x^\sharp$ is the location of the ``leader'' for the car located at $x$. 
In the literature,~\eqref{DQ} belongs to a type of equations which is
called a {\em delay differential equation} (DDE), or
a {\em differential equation with retarded argument}.

When the road condition is uniform so that $k(x)\equiv V$ is constant, it is known that 
the solutions of the FtL model~\eqref{dot-rho-i} 
converge to the scalar conservation law 
(cf.~\cites{MR3356989,MR3217759,HoldenRisebro,HoldenRisebro2} and references there in)
\begin{equation}\label{claw-U}
\rho_t + f(\rho)_x =0, \qquad f(\rho)\;\dot=\; V\rho \cdot \phi(\rho), 
\end{equation}
as $\ell\to 0+$,  under suitable assumptions on the initial data. 
In the literature it is customary  to consider the flux $f$ a concave function
with
\bel{fdd} f'' \le -c_0<0.
\eeq
This leads to the following reasonable assumption on $\phi$:
\bel{phi-prop2}
-\phi''(\rho) > \frac{1}{\rho} \left[ 2\phi'(\rho)+c_0/V \right].
\eeq

In this simpler case where $k(x)=V$, equation~\eqref{DQ} takes a simpler form.
Let $W(x)$ denote this stationary profile.  We have
\bel{DW}
W'(x) = \frac{W(x)^2}{\ell \cdot \phi(W(x))} \cdot 
\left[ \phi(W(x)) - \phi(W(x^\sharp))\right], \qquad x^\sharp=x+\frac{\ell}{W(x)}.
\eeq
Equation~\eqref{DW} is studied  by the author and collaborator 
in~\cite{ShenKarim2017}, where
we establish the existence and uniqueness (up to horizontal shifte) 
of the
profile $W(x)$, connecting two ``boundary'' conditions at the infinities
\[
 \lim_{x\to\pm\infty} W(x) =  \rho_\pm, \quad \mbox{where}
\quad
 0\le \rho_- \le \rho^*\le \rho_+\le 1, \quad f(\rho_-)=f(\rho_+), \quad 
 f'(\rho^*)=0.
\]
We show that 
the profile $W(x)$ is monotone and 
approaches $\rho_\pm$ at an exponential rate.
Furthermore, we  prove that the profile $W(x)$ is a local attractor
for nearby solutions of the FtL model.

\medskip

In this paper we consider rough road condition, and analyze the behavior of solutions
in the neighborhood of a discontinuity in $k(x)$. 
To fix the idea, we consider the case where $k(x)$ is piecewise constant and 
has a jump at $x=0$, i.e.,
\begin{equation}\label{eq:V0}
k(x) = \begin{cases} V_+,  \quad& (x \ge 0), \\ 
V_-, & (x<0) . \end{cases}
\end{equation}
The ODEs for $\rho_i$ in~\eqref{dot-rho-i} take the following form
\begin{equation}\label{dot-rho-2}
\dot \rho_i ~=~ 
\begin{cases}
\displaystyle \ell^{-1} V_-\rho_i^2\; \Big[\phi(\rho_{i})-\phi(\rho_{i+1})\Big], 
& ( z_i < z_{i+1}<0),\\[2mm]
\displaystyle \ell^{-1} \rho_i^2 \;\Big[ V_- \phi(\rho_i)-V_+\phi(\rho_{i+1})\Big],
 \qquad & (z_i <0 \le z_{i+1}),\\[2mm]
\displaystyle \ell^{-1} V_+ \rho_i^2\; \Big[\phi(\rho_{i})-\phi(\rho_{i+1})\Big], 
\quad & (0 \le z_i < z_{i+1}).
\end{cases}
\end{equation}
The system of ODEs in~\eqref{dot-rho-2} has discontinuous right hand side. 
The discontinuity occurs twice for each $\rho_i$, 
as the car position $z_i$ crosses $x=0$,  and as its leader $z_{i+1}$ crosses $x=0$.

The corresponding profile $Q(x)$ satisfies the following 
{\em discontinuous delay differential equation} (DDDE):
\begin{equation}\label{DQ-2}
Q'(x) 
= 
\begin{cases}
\displaystyle \frac{Q(x)^2}{\ell \phi(Q(x))} \left[ \phi(Q(x))-\phi(Q(x^\sharp))\right] ,
&\displaystyle
 (x^\sharp <0 \quad \mbox{or}\quad x>0) , \\[3mm]
\displaystyle \frac{Q(x)^2}{\ell V_-\phi(Q(x))} \left[ V_-\phi(Q(x))-V_+\phi(Q(x^\sharp))\right], 
\quad   &\displaystyle
 (x<0<x^\sharp) , 
 \end{cases}
\end{equation} 
where 
\[x^\sharp=x+\ell/Q(x)\]
is the position for the leader of the car at $x$. 
Note that for the first  case in~\eqref{DQ-2} the equation is the same as~\eqref{DW}. 
For the second case, where the car is behind the jump in $k(x)$ but the leader 
is ahead of the jump,  the equation is different from~\eqref{DQ-2}.

\medskip

Formally, as $\ell\to 0$, the car density function $\rho$ 
satisfies the following  conservation law:
\begin{equation}\label{claw}
\rho_t + f(k(x),\rho)_x =0, \qquad \mbox{where}\quad 
f(k,\rho)\;\dot=\;   k \rho \, \phi(\rho). 
\end{equation}
Here $k(x)$ is discontinuous at $x=0$.
Two types of jumps occur in the solution, 
namely
the $k$-jump at $x=0$ and the $\rho$-shock where $k$ is constant.
The $\rho$-shock and its corresponding traveling wave profiles of the FtL model 
is studied in~\cite{ShenKarim2017}, where existence, uniqueness and local stability
are proved.
In this paper we consider the $k$-jump at $x=0$, 
and analyze the stationary profile $Q(x)$ that
connects the two constant states $\rho_\pm$ as $x\to\pm\infty$.   

There are various cases, with different relations between $(V_-,V_+)$ and $(\rho_-,\rho_+)$. 
For each of these cases, 
we study the initial value problem for~\eqref{DQ-2},
with initial data given on $x\ge 0$. 
Due to the discontinuity in the coefficient $k(x)$, the analysis is non-trivial. 
The initial value problem of the DDDE~\eqref{DQ-2} 
can be solved by method of steps, solving backwards in $x$ over a 
suitable interval in each step. 
At some steps, as $x$ or $x^\sharp$ cross 0, 
one needs to solve a discontinuous ODE.
The existence and well-posedness of the solutions can be established 
under the {\em transversality condition}, i.e., at every point
where the right hand side of the ODE has a jump,
the vector field for ODE crosses the curve of jump transversally. 
For literature on discontinuous ODEs and transversality condition, 
we refer to~\cites{MR964856,MR1634652,MR2239406,MR1028776}
and the references therein.

%

We also show that  the solution of the initial value problem 
with suitable initial data gives
the desired stationary profile $Q(x)$ with the given boundary conditions at
the infinities.
For different cases we prove that:
(i) there exist infinitely many profiles, 
(ii) there exists exactly one profile, or (iii) no profile exists. 
Depending on the case, 
some of the profiles attract nearby solutions for the FtL model,
while others are unstable.

\bigskip
We compare our result to the classical vanishing viscosity approach.
The conservation law~\eqref{claw} can be approximated by a viscous 
equation 
\bel{claw-vis}
\rho_t + f(k(x),\rho)_x = \ve \rho_{xx},
\eeq
where $\ve>0$ is a small parameter representing the viscosity. 
When $k(x)$ has a jump as in~\eqref{eq:V0}, the $k$-jump
at $x=0$ has a corresponding stationary viscous profile $\rho^\ve(x)$,
satisfying the ODE
\bel{ODE-vis}
 \frac{d}{dx} \rho^\ve(x) =\frac{1}{\ve} \left[ f(k(x),\rho^\ve(x)) - \bar f \; \right],
\qquad \mbox{where}\quad 
\bar f = f(V_-,\rho_-) = f(V_+, \rho_+).
\eeq

Monotone viscous profiles exist if one of the followings holds:
\begin{itemize}
\item We have $\rho_- < \rho_+$ and there exists a $\hat\rho \in[\rho_-,\rho_+]$ such that 
\[
f(V_-,\rho)>\bar f ~ \mbox{for}~\rho\in[\rho_-,\hat\rho], 
\quad\mbox{and}\quad 
f(V_+,\rho)>\bar f ~ \mbox{for}~ \rho\in [\hat\rho,\rho_+].
\]
\item We have $\rho_->\rho_+$ and there exists a $\hat\rho \in[\rho_+,\rho_-]$ such that 
\[
f(V_-,\rho)<\bar f ~ \mbox{for}~\rho\in[\rho_+,\hat\rho], 
\quad\mbox{and}\quad 
f(V_+,\rho)<\bar f ~ \mbox{for}~ \rho\in [\hat\rho,\rho_-].
\]
\end{itemize}
See~\cites{MR1109304,MR3663611,GS2016} for more details.
For other general references on scalar conservation law with discontinuous 
coefficient, we refer to a survey paper~\cite{MR3416038} and the references therein. 
Other related references on micro-macro models for traffic flow and their analysis
include~\cites{MR2727134, MR3253235,MR3541527,MR3177735}.
We would like to mention a recent work \cite{MR3714974}  
(and the references therein), 
which considers the traveling waves for degenerate diffusive equations on network,
where a necessary and sufficient algebraic condition  is established 
for the existence of traveling waves. 

\medskip

The rest of the paper is organized as follows. 
In section 2 we present various technical Lemmas, on specific properties 
for the solutions of~\eqref{DQ-2} and~\eqref{DW}. 
Section 3 is dedicated to the case with $V_->V_+$,
where 4 sub-cases are analyzed in detail. 
The analytical result is also confirmed by numerical simulations.
For one sub-case, we also show that the profiles $Q(x)$ are 
attractor for the solutions of the FtL model. 
The case with $V_-<V_+$ is studied in section 4,
following a similar line of approach as in section 3. 
The analysis for the main sub case here is much more involving
due to the lack of monotonicity. 
In section 5 we present a numerical simulation with ``Riemann initial data''. 
Finally, concluding remarks are given in section 6.

\section{Technique Lemmas}\setcounter{equation}{0}
For the rest of the paper, 
we denote the flux functions
\bel{fpm}
f_-(\rho)\; \dot=\; V_- \,\rho \,\phi(\rho),
\qquad 
f_+(\rho) \;\dot=\; V_+\,\rho \,\phi(\rho).
\eeq

Since the jump is stationary, 
the Rankine-Hugoniot condition requires
\begin{equation}\label{RH}
f_- (\rho_-) =f_+(\rho_+) \;\dot=\; \bar f\ge 0. 
 \end{equation}
We note  that the cases with  $\bar f=0$  are trivial, since they represent the cases
where the road is either empty or completely bumper-to-bumper. 
Indeed, we have:
\begin{itemize}
\item If $\rho_-=\rho_+=0$ then there is no car on the road; 
\item If $\rho_-=\rho_+=1$ then the road is completely bumper-to-bumper with cars and no one moves;
\item If $\rho_-=0,\rho_+=1$, then there is no car on $x<0$ but 
completely bumper-to-bumper  on $x>0$, therefore no one moves.
\end{itemize}
For the rest of the discussion, we assume 
\[
\bar f >0, \qquad \mbox{i.e.}~~ 0 <\rho<1.
\]

We start with some definitions.
\begin{definition}\label{def:1}
Let $Q(x)$ be a continuous function defined on $x\in\mathbb{R}$ with $0 < Q(x) <1$. 
We call a sequence of car positions  $\{z_i\}$
\textbf{a distribution of car positions generated by $Q(x)$}, if 
\begin{equation}\label{eq:def1}
z_{i+1}-z_i = \frac{\ell}{Q(z_i)}, \qquad \forall i\in\mathbb{Z}.
\end{equation}
\end{definition}

Note that if one imposes $z_0=0$, then the distribution  $\{z_i\}$ is unique.

\begin{definition}\label{def:2}
Given a profile $Q(x)$ and a distribution of car positions $\{z_i(t)\}$.
Let  $\{\rho_i(t)\}$ be the corresponding discrete densities for the cars, 
computed as~\eqref{def-rho-i}.
We say that $\{z_i(t),\rho_i(t)\}$ \textbf{traces along $Q(x)$},  if 
\[
 Q(z_i(t)) = \rho_i(t),  \qquad \forall i\in\mathbb{Z},~~ t\ge 0.
\]
\end{definition}

The following Lemma is  immediate.

\begin{lemma}\label{lm:0}
Let $Q(x)$ be a given profile and $\{z_i(0)\}$ be a distribution generated by $Q(x)$.  
Let $\{z_i(t)\}$ be the solution of~\eqref{dotzi} with initial data $\{z_i(0)\}$,
and let $\{\rho_i(t)\}$ be the corresponding discrete density. 
Then, $Q(x)$ satisfies~\eqref{DQ} 
if and only if $\{z_i(t),\rho_i(t)\}$ traces along $Q(x)$.
\end{lemma}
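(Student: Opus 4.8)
The plan is to prove the two implications separately, after first recasting the tracing property in a more convenient form. Observe that, by the definition \eqref{def-rho-i} of the discrete density, the tracing identity $Q(z_i(t))=\rho_i(t)$ is equivalent to the gap relation $z_{i+1}(t)-z_i(t)=\ell/Q(z_i(t))$; that is, tracing holds at time $t$ precisely when the configuration $\{z_i(t)\}$ is a distribution generated by $Q$ in the sense of Definition \ref{def:1}. Moreover, whenever tracing holds the FtL velocity law \eqref{dotzi} collapses to the \emph{decoupled} scalar law $\dot z_i = k(z_i)\,\phi(Q(z_i))=:F(z_i)$, since each $\rho_i$ can be replaced by $Q(z_i)$. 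The standing hypothesis that $\{z_i(0)\}$ is generated by $Q$ says exactly that tracing holds at $t=0$, so in both directions the issue is whether this property is produced, respectively propagated, by the dynamics.

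For the direction ``tracing $\Rightarrow$ \eqref{DQ}'', I would differentiate the identity $Q(z_i(t))=\rho_i(t)$ in $t$, which is legitimate since it is assumed to hold for all $t$. This gives $Q'(z_i)\dot z_i=\dot\rho_i$, and substituting \eqref{dotzi} and \eqref{dot-rho-i} together with the tracing consequences $z_{i+1}=z_i+\ell/Q(z_i)=z_i^\sharp$ and $\rho_{i+1}=Q(z_{i+1})=Q(z_i^\sharp)$ reproduces exactly the computation carried out in the Introduction, yielding \eqref{DQ} evaluated at the point $x=z_i(t)$. To upgrade this pointwise identity to the statement that $Q$ satisfies \eqref{DQ} as a function, I would note that each velocity is strictly positive (since $k>0$ and $0<\rho_i<1$ force $\phi(\rho_i)>0$), so each trajectory $t\mapsto z_i(t)$ is strictly monotone and sweeps out an interval of $x$-values; as $i$ and $t$ range, these intervals cover the region occupied by the profile, and since both sides of \eqref{DQ} are continuous in $x$ the identity extends there.

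For the converse ``\eqref{DQ} $\Rightarrow$ tracing'', which is the substantive direction, I would not work with the coupled FtL system directly but instead construct a candidate trajectory that traces by design. Define $\tilde z_i(t)$ as the solution of the decoupled scalar ODE $\dot{\tilde z}_i=F(\tilde z_i)$ with $\tilde z_i(0)=z_i(0)$, and set $u_i=\ell/Q(\tilde z_i)$ and $w_i=(\tilde z_{i+1}-\tilde z_i)-u_i$, so that $w_i(0)=0$ by hypothesis. A short computation using \eqref{DQ} at $x=\tilde z_i$ gives $\dot u_i=F(\tilde z_i^\sharp)-F(\tilde z_i)$ with $\tilde z_i^\sharp=\tilde z_i+u_i$, whence $\dot w_i=F(\tilde z_{i+1})-F(\tilde z_i^\sharp)=F(\tilde z_i+u_i+w_i)-F(\tilde z_i+u_i)$. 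The key point is that this is a \emph{scalar} equation for each $w_i$, driven by the already-determined functions $\tilde z_i(t),u_i(t)$ and vanishing identically when $w_i=0$; hence $w_i\equiv 0$ by uniqueness. Thus $\{\tilde z_i\}$ satisfies the gap relation, its density equals $Q(\tilde z_i)$, and therefore $\dot{\tilde z}_i=k(\tilde z_i)\phi(Q(\tilde z_i))=k(\tilde z_i)\phi(\tilde\rho_i)$ is a genuine solution of \eqref{dotzi} with the same initial data as $\{z_i\}$. By uniqueness of the FtL initial value problem, $\tilde z_i\equiv z_i$, so the actual solution traces along $Q$.

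The main obstacle I anticipate is purely one of regularity: both the cancellation $w_i\equiv 0$ and the final identification $\tilde z_i\equiv z_i$ rest on uniqueness for ODEs whose field involves $F(x)=k(x)\phi(Q(x))$, and in the setting of interest $k$ is discontinuous and $Q$ is only continuous. In the uniform case this is immediate, but at the jump $x=0$ I would invoke the well-posedness guaranteed by the \emph{transversality condition} recalled in the Introduction to justify existence and uniqueness of the relevant trajectories across the discontinuity; the sweeping argument in the first direction, by contrast, needs only the strict positivity of the speeds, which is already available.
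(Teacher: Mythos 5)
Your argument is correct, and it is necessarily more detailed than the paper's, since the paper simply declares this lemma ``immediate'' and offers no proof: the intended content is that the forward implication is exactly the computation already carried out in the Introduction (differentiate $Q(z_i(t))=\rho_i(t)$ and substitute \eqref{dotzi} and \eqref{dot-rho-i}), and the converse is understood as reversing that computation, in the same spirit as the proof of Lemma~\ref{lm1}. Your forward direction reproduces the paper's derivation, with the extra (and worthwhile) observation that the pointwise identity at $x=z_i(t)$ must be swept out over all of $\mathbb{R}$; note that this covering step does require the speeds to stay bounded away from zero on each gap $[z_i(0),z_{i+1}(0)]$, which follows from continuity of $Q$ and $Q<1$ on compacts, so it is fine but worth stating. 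Your converse is the genuinely different contribution: rather than ``reversing the computation,'' you decouple the system by defining $\tilde z_i$ through $\dot{\tilde z}_i=k(\tilde z_i)\phi(Q(\tilde z_i))$, show via \eqref{DQ} that the defect $w_i=(\tilde z_{i+1}-\tilde z_i)-\ell/Q(\tilde z_i)$ satisfies a scalar ODE whose right-hand side vanishes at $w_i=0$, conclude $w_i\equiv0$ by uniqueness, and then identify $\tilde z_i$ with $z_i$ by uniqueness of the FtL flow. This buys a genuinely rigorous proof of the direction the paper leaves implicit, at the price of the two uniqueness invocations you correctly flag: both the $w_i\equiv0$ step and the final identification need either Lipschitz regularity of $x\mapsto k(x)\phi(Q(x))$ or, across the jump of $k$ at $x=0$, the transversality condition (guaranteed here by strict positivity of the speeds), exactly as in the framework of \cite{MR964856} that the paper invokes elsewhere. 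With that caveat made explicit, the proposal is a complete and correct proof.
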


Solutions of~\eqref{DQ} exhibit a periodical behavior. 

\begin{lemma}\label{lm1} (\textbf{Periodicity})
Let a continuous function $Q(x)$ be given  on $x\in\mathbb{R}$ with $0<Q(x)<1$.
Let $\{z_i(0)\}$ be a distribution of car positions generated by $Q(x)$,
and let $\{z_i(t)\}$ be the solution of the FtL model~\eqref{dotzi} with this 
initial data. 
Then the followings are equivalent.
\begin{itemize}
\item[(a)] $Q(x)$ satisfies the equation~\eqref{DQ};
\item[(b)]  There exist a constant period $t_p$ such that 
\bel{eq:lm1}
 z_i(t+t_p) = z_{i+1}(t), \qquad \forall i\in\mathbb{Z}, t\ge 0.
 \eeq
\end{itemize}
\end{lemma}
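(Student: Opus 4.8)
The plan is to use Lemma~\ref{lm:0} as the bridge: by that lemma, statement (a) is equivalent to the tracing identity $\rho_i(t)=Q(z_i(t))$ for all $i,t$. I treat the two implications separately, since they are not symmetric. The common ingredient is the \emph{travel-time function}
\[
\tau(x)\;\dot=\;\int_x^{x^\sharp}\frac{ds}{k(s)\,\phi(Q(s))},\qquad x^\sharp=x+\frac{\ell}{Q(x)},
\]
which measures the time a car starting at $x$ needs to reach the current position of its leader under the velocity field $v(x)\dot=k(x)\phi(Q(x))$. Note $v>0$, since $0<Q<1$ forces $\phi(Q)>0$, so $\tau$ is well defined and every car moves strictly to the right.

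For (a)$\implies$(b): assuming (a), Lemma~\ref{lm:0} gives tracing, so $\dot z_i=k(z_i)\phi(Q(z_i))=v(z_i)$ and every car obeys the \emph{same} autonomous scalar equation $\dot z=v(z)$. Introducing $G(x)\dot=\int_0^x ds/v(s)$, which is continuous and strictly increasing, this reads $G(z_i(t))=G(z_i(0))+t$, i.e.\ $z_i(t)=G^{-1}(G(z_i(0))+t)$. Then $z_i(t+t_p)=z_{i+1}(t)$ holds for all $t$ precisely when $t_p=G(z_{i+1}(0))-G(z_i(0))=\tau(z_i(0))$, using $z_{i+1}(0)=z_i(0)^\sharp$ from~\eqref{eq:def1}. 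It remains to see this value is $i$-independent, and here is the key computation: differentiating $\tau(x)=G(x^\sharp)-G(x)$ and using $(x^\sharp)'=1-\ell Q'(x)/Q(x)^2$ gives $\tau'(x)=0$ if and only if $Q$ satisfies~\eqref{DQ} at $x$. Hence (a) makes $\tau$ constant, and this common value serves as the period $t_p$.

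For (b)$\implies$(a): differentiating $z_i(t+t_p)=z_{i+1}(t)$ in $t$ and invoking~\eqref{dotzi} together with $z_i(t+t_p)=z_{i+1}(t)$ cancels the common factor $k(z_{i+1}(t))$ and leaves $\phi(\rho_i(t+t_p))=\phi(\rho_{i+1}(t))$; since $\phi$ is strictly decreasing, hence injective, this yields $\rho_i(t+t_p)=\rho_{i+1}(t)$. Iterating both relations gives $z_i(t)=z_0(t+i\,t_p)$ and $\rho_i(t)=\rho_0(t+i\,t_p)$, so that \emph{all} cars trace along the single profile $P(x)\dot=\rho_0(z_0^{-1}(x))$, where $z_0$ is strictly increasing because the right-hand side of~\eqref{dotzi} is positive. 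Applying Lemma~\ref{lm:0} to $P$ (whose generating distribution is again $\{z_i(0)\}$, since $\ell/(z_{i+1}(0)-z_i(0))=\rho_i(0)=P(z_i(0))$) shows $P$ satisfies~\eqref{DQ}, while~\eqref{eq:def1} forces $P(z_i(0))=\rho_i(0)=Q(z_i(0))$.

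I expect the last step of (b)$\implies$(a) to be the main obstacle: the dynamics only pin down $P=Q$ along the discrete orbit $\{z_i(0)\}$, so that to conclude $Q$ \emph{itself} solves~\eqref{DQ} one must upgrade this sampled agreement to the full tracing identity $\rho_i(t)=Q(z_i(t))$ for all $t$, i.e.\ show $P\equiv Q$. I would close this by letting $t$ sweep one full period: as $t$ ranges over $[0,t_p)$ the car $z_i$ covers the whole cell $[z_i(0),z_{i+1}(0))$, so the points $\{z_i(t)\}$ exhaust $\mathbb{R}$, and I would then exploit the delay structure linking consecutive cells (the leader of car $0$ on $[i\,t_p,(i+1)t_p]$ being car $0$ itself shifted by $t_p$) to propagate the boundary matching at the cell endpoints into the cell interiors. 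This continuation argument, rather than the travel-time identity, is the delicate part of the proof and should be stated under the standing understanding that $Q$ is the profile actually traced by the solution.
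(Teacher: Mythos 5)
Your (a)$\implies$(b) direction is correct and is essentially the paper's own argument: the paper writes the travel time $t_p=\int_x^{x^\sharp}\frac{dz}{k(z)\phi(Q(z))}$, differentiates in $x$, and observes that constancy of $t_p$ is equivalent to \eqref{DQ}; it presents the computation once and disposes of the other direction with ``reverse the arguments''. Your version with the primitive $G$ is that same computation written out cleanly in the direction the paper omits, together with the (correct) appeal to Lemma~\ref{lm:0} to justify $\dot z_i=k(z_i)\phi(Q(z_i))$, which the paper uses without comment.

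The issue is (b)$\implies$(a). Your differentiation step, the shift relation $\rho_i(t+t_p)=\rho_{i+1}(t)$, and the construction of the traced profile $P$ are all sound and go beyond what the paper writes down --- but the gap you flag at the end is genuine, and the repair you sketch cannot close it. The FtL system \eqref{dotzi} sees $Q$ only through the countably many values $Q(z_i(0))$ entering via \eqref{eq:def1}: if $\tilde Q$ is any continuous function with $0<\tilde Q<1$ agreeing with $Q$ at every $z_i(0)$ but differing in between, then $\{z_i(0)\}$ is also generated by $\tilde Q$, the solution $\{z_i(t)\}$ is unchanged, and (b) holds for $\tilde Q$ exactly when it holds for $Q$ --- yet $\tilde Q$ cannot also satisfy the pointwise equation \eqref{DQ} if $Q$ does. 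So sweeping $t$ over a period cannot recover $Q$ off the orbit: what the sweep produces is precisely your $P$, and nothing forces $P$ to coincide with $Q$ except at the sample points. The honest conclusion from (b) is that the profile $P$ actually traced by the solution satisfies \eqref{DQ} and interpolates the data $Q(z_i(0))$. This is also the true status of the paper's proof of this direction, which writes $\frac{dz}{dt}=k(z)\phi(Q(z))$ and thereby tacitly assumes the solution traces along $Q$ --- i.e., assumes the content of (a) via Lemma~\ref{lm:0}. Your closing remark, that the implication should be read ``under the standing understanding that $Q$ is the profile actually traced by the solution,'' is exactly the interpretation under which the lemma and both proofs are correct; under that reading your argument is complete and the continuation step you were worried about is unnecessary.
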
 

\begin{proof}
We first prove that (b) implies (a). 
Writing
\[ 
z_i(0)=x, \qquad z_{i+1}(0) = x^\sharp= x+ \ell/Q(x) , 
\]
and using 
\[ 
\frac{dz}{dt} = k(z) \cdot \phi(Q(z)), \qquad \rightarrow \quad 
\frac{dz}{ k(z) \cdot \phi(Q(z))} = dt,
\]
the time it takes for car no $i$ to reach the position of its leader is 
\[
 t_p = 
\int_{x}^{x+\ell/Q(x)} 
\frac{1}{k(z) \phi(Q(z))} dz = \mbox{constant}.
\]
Differentiating the above equation in $x$ on both sides, one gets
\[
 (1- \ell Q'(x)/Q^2(x)) \frac{1}{k(x^\sharp) \phi(Q(x^\sharp))}
-\frac{1}{k(x)\phi(Q(x))} =0,
\]
which easily leads to~\eqref{DQ}.
The proof for (a) implies (b) can be obtained by reversing the order of the above 
arguments.
\end{proof}

The next lemma connects the period $t_p$ with the flux $\bar f$ at the infinities.

\begin{lemma}\label{lm:2}
(i)   In the setting of Lemma~\ref{lm1}, if we have 
   \bel{eq:lm2-1} \lim_{x\to\infty} Q(x) = \rho_+, \quad 
   \lim_{x\to-\infty} Q(x) = \rho_-, \quad f_-(\rho_-)=f_+(\rho_+) =\bar f,
   \eeq
   then the period is determined as
   \bel{eq:lm2-2} t_p = \frac{\ell}{\bar f}.\eeq
(ii)  On the other hand, if the period $t_p$ is given and the
   solution approach some asymptotic limits such that
   \[ \lim_{x\to\infty} Q(x)=\rho_+, \quad \lim_{x\to-\infty} Q(x)=\rho_-,\]
   then the limits must satisfy
   \[
   f_-(\rho_-)=f_+(\rho_+) =\frac{\ell}{t_p}. 
   \]
\end{lemma}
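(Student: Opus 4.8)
The plan is to exploit the integral representation of the period that already appeared inside the proof of Lemma~\ref{lm1}: whenever $Q$ satisfies~\eqref{DQ} (equivalently, whenever the periodicity~\eqref{eq:lm1} holds), the quantity
\[
t_p = \int_{x}^{x+\ell/Q(x)} \frac{1}{k(z)\,\phi(Q(z))}\, dz
\]
is independent of $x$. Both assertions (i) and (ii) concern the asymptotic limits $\rho_\pm$, so I would evaluate this $x$-independent integral by letting $x\to\pm\infty$ and identifying the resulting closed form.

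For part (i), take first $x\to+\infty$. For all sufficiently large $x$ we have $x>0$, so $k\equiv V_+$ on the whole interval $[x,\,x+\ell/Q(x)]$, while $Q(z)\to\rho_+$ as $z\to+\infty$. The interval length $\ell/Q(x)$ tends to $\ell/\rho_+$, and the integrand $1/\bigl(V_+\phi(Q(z))\bigr)$ converges uniformly on this family of translating intervals to $1/\bigl(V_+\phi(\rho_+)\bigr)$; here $\phi(\rho_+)>0$ since $0<\rho_+<1$, which is forced by $\bar f>0$. Passing to the limit,
\[
t_p = \frac{\ell/\rho_+}{V_+\,\phi(\rho_+)} = \frac{\ell}{\rho_+\,V_+\,\phi(\rho_+)} = \frac{\ell}{f_+(\rho_+)} = \frac{\ell}{\bar f},
\]
which is~\eqref{eq:lm2-2}. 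Letting instead $x\to-\infty$ (where eventually $x+\ell/Q(x)<0$, so $k\equiv V_-$) yields $t_p=\ell/f_-(\rho_-)$ by the same computation; the two values coincide precisely because of the Rankine--Hugoniot relation $f_-(\rho_-)=f_+(\rho_+)=\bar f$, which serves as a consistency check.

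Part (ii) follows from the identical limiting procedure, now read in the opposite direction. Since the period $t_p$ is prescribed---i.e.\ property~\eqref{eq:lm1} holds---Lemma~\ref{lm1} gives that $Q$ solves~\eqref{DQ}, so the $x$-independent integral formula above is available and equals $t_p$. Letting $x\to+\infty$ then gives $t_p=\ell/f_+(\rho_+)$, hence $f_+(\rho_+)=\ell/t_p$, and letting $x\to-\infty$ gives $f_-(\rho_-)=\ell/t_p$; together these are the asserted identities. The only point requiring care is the interchange of limit and integral: the intervals of integration have uniformly bounded length and merely translate to infinity while $Q$ tends to a finite nonzero limit there, so continuity of $\phi$ already yields uniform convergence of the integrand and no further domination argument is needed. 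I therefore expect the whole lemma to drop out almost immediately once the constant-integral identity of Lemma~\ref{lm1} is invoked, with this elementary limit interchange being the only genuine (and mild) obstacle.
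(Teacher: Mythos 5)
Your argument is correct. Note that the paper does not actually prove this lemma: it simply refers to the proof of Lemma~2.7 in \cite{ShenKarim2017}. Your proposal supplies exactly the argument one would expect there: the $x$-independence of
$t_p=\int_x^{x+\ell/Q(x)}\bigl(k(z)\phi(Q(z))\bigr)^{-1}dz$ established inside the proof of Lemma~\ref{lm1}, followed by passing to the limit $x\to\pm\infty$, where $k$ is eventually constant ($V_+$ on the right, $V_-$ on the left, the latter because $\ell/Q(x)$ stays bounded so the whole interval eventually lies in $x<0$), the integrand converges uniformly to $1/(V_\pm\phi(\rho_\pm))$, and the interval length converges to $\ell/\rho_\pm$, yielding $t_p=\ell/f_\pm(\rho_\pm)$. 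The only cosmetic point worth adding for part~(ii) is that finiteness of the given period $t_p$ already forces $0<\rho_\pm<1$ (otherwise either the interval length or the integrand blows up), so the limit interchange you describe is justified there as well; this is consistent with the paper's standing assumption $\bar f>0$.
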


The proof is for Lemma~\ref{lm:2} is the same as the proof of
Lemma~2.7 in \cite{ShenKarim2017}. 
We skip the details.

\bigskip
Next Lemma shows that the solution $Q(x)$ is monotone 
in some sense of ``average''.

\begin{lemma}\label{lm4}
Let $Q(x)$ be a profile that satisfies~\eqref{DQ}. Given $x$,  we let 
\[x^\sharp=x+\ell/Q(x)\]
be the position of the leader for the car at $x$. 
Then, for any $x$, we have 
\bel{ss-a}
\frac{\ell}{\bar f} - \frac{\ell}{f(k(x),Q(x))} 
= 
 \int_x^{x^\sharp}
 \left[
\frac{1}{k(z) \phi(Q(z))} - \frac{1}{k(x)\phi(Q(x))}\right] \; dz.
\eeq
When $k(x)\equiv V$ is constant on $[x,x^\sharp]$, \eqref{ss-a} is simplified to 
\bel{ss5}
\frac{\ell}{\bar f} - \frac{\ell}{f(V,Q(x))} 
= \frac{1}{V}
 \int_x^{x^\sharp}
 \left[
\frac{1}{ \phi(Q(z))} - \frac{1}{\phi(Q(x))}\right] \; dz.
\eeq
\end{lemma}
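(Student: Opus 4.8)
The plan is to derive the identity \eqref{ss-a} directly from the periodicity relation \eqref{eq:lm2-2} established in Lemma~\ref{lm:2}, by splitting the integral that defines the period $t_p$. First I would recall from the proof of Lemma~\ref{lm1} that the period can be written as the travel-time integral
\[
t_p \;=\; \int_x^{x^\sharp} \frac{1}{k(z)\,\phi(Q(z))}\,dz,
\]
and by Lemma~\ref{lm:2}(i), under the boundary conditions at $\pm\infty$, this equals $\ell/\bar f$. The key algebraic move is to add and subtract the constant integrand $\frac{1}{k(x)\phi(Q(x))}$ inside the integral, so that
\[
\frac{\ell}{\bar f} \;=\; \int_x^{x^\sharp} \frac{1}{k(x)\phi(Q(x))}\,dz \;+\; \int_x^{x^\sharp}\left[\frac{1}{k(z)\phi(Q(z))} - \frac{1}{k(x)\phi(Q(x))}\right]dz.
\]

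Next I would evaluate the first integral explicitly. Since the integrand $\frac{1}{k(x)\phi(Q(x))}$ is constant in $z$, the integral is just this constant times the length $x^\sharp - x = \ell/Q(x)$, giving
\[
\int_x^{x^\sharp} \frac{1}{k(x)\phi(Q(x))}\,dz \;=\; \frac{\ell}{Q(x)\,k(x)\,\phi(Q(x))} \;=\; \frac{\ell}{f(k(x),Q(x))},
\]
where the last equality uses the definition $f(k,\rho)=k\rho\phi(\rho)$ from \eqref{claw}. Substituting this back and rearranging terms yields exactly \eqref{ss-a}.

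For the simplified form \eqref{ss5}, I would simply set $k(z)\equiv V$ throughout $[x,x^\sharp]$ in \eqref{ss-a} and factor the constant $1/V$ out of the integral, which immediately produces the stated identity. I do not anticipate a genuine obstacle here: the result is essentially a bookkeeping rearrangement of the period formula, and the only care needed is to confirm that the travel-time representation of $t_p$ from Lemma~\ref{lm1} applies even when $k$ is discontinuous on $[x,x^\sharp]$—that is, when $x<0<x^\sharp$. This holds because the integral $\int_x^{x^\sharp} \frac{dz}{k(z)\phi(Q(z))}$ is well defined for the piecewise-constant $k$ (the integrand has at worst a jump discontinuity at $z=0$, which does not affect integrability), so the derivation goes through verbatim in all cases.
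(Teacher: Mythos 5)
Your proposal is correct and follows essentially the same route as the paper: both start from the periodicity identity $t_p=\int_x^{x^\sharp}\frac{dz}{k(z)\phi(Q(z))}=\ell/\bar f$ of Lemmas~\ref{lm1} and~\ref{lm:2}, subtract the constant-integrand identity $\int_x^{x^\sharp}\frac{dz}{k(x)\phi(Q(x))}=\frac{\ell}{f(k(x),Q(x))}$, and rearrange. Your extra remark about the integrand's jump at $z=0$ when $x<0<x^\sharp$ is a harmless (and accurate) refinement that the paper leaves implicit.
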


\begin{proof}
The Lemma follows immediately from the periodicity property in Lemma~\ref{lm1}
\[
\frac{\ell}{\bar f} 
= 
\int_x^{x^\sharp}
\frac{1}{k(z) \phi(Q(z))} \; dz,
\]
and subtracting from it the identity
\[
\frac{\ell}{f(V,Q(x))}  =  \frac{1}{V}
 \int_x^{x^\sharp}\frac{1}{\phi(Q(x))} \; dz.
\] 
\end{proof}
\begin{remark}
Since $\phi'<0$, the mapping $\rho\mapsto (1/\phi(\rho))$ 
is monotone increasing. 
Then, \eqref{ss5} roughly 
says that if $f(V,Q(x)) > \bar f$ at some $x$, 
then some ``averaged-value'' of $Q$ on 
$[x,x^\sharp]$ 
is larger than $Q(x)$, so in ``average'' $Q(x)$ is increasing.
Similarly, if $f(V,Q(x)) < \bar f$ at some $x$, 
then  in ``average'' $Q(x)$ is decreasing.
\end{remark}

\begin{lemma}\label{lm4.5}
Let $Q(x)$ be a profile that satisfies~\eqref{DQ}. 
Let $\{z_i\}$ be  a distribution of car positions generated by $Q(x)$.
Then, for any $y$ with 
\[ z_i < y < z_{i+1}\]
we have 
\begin{equation}\label{eq:lm45}
 z_{i+1} < y^\sharp < z_{i+2}, \qquad \mbox{where} \quad 
y^\sharp = y+ \ell/Q(y).
\end{equation}
\end{lemma}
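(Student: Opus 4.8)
The plan is to reduce \eqref{eq:lm45} to the single statement that the leader map $g(x)=x^\sharp=x+\ell/Q(x)$ is strictly increasing on $\mathbb{R}$. First I would record the two endpoint identities that follow directly from the hypothesis that $\{z_i\}$ is generated by $Q$: since $z_{i+1}-z_i=\ell/Q(z_i)$, we have $g(z_i)=z_{i+1}$, and likewise $g(z_{i+1})=z_{i+2}$. With these, the desired conclusion $z_{i+1}<y^\sharp<z_{i+2}$ for $z_i<y<z_{i+1}$ is precisely $g(z_i)<g(y)<g(z_{i+1})$, i.e. the monotonicity of $g$ evaluated on the relevant interval.

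To establish monotonicity I would simply rearrange the profile equation \eqref{DQ}. Multiplying through by $\ell/Q(x)^2$ gives
\[
g'(x)=1-\frac{\ell Q'(x)}{Q(x)^2}=\frac{k(x^\sharp)\,\phi(Q(x^\sharp))}{k(x)\,\phi(Q(x))}.
\]
Because the speed $k$ is positive and $0<Q<1$ forces $\phi(Q)>0$ (as $\phi$ is decreasing with $\phi(0)=1$ and $\phi(1)=0$), the right-hand side is strictly positive. Hence $g'>0$ wherever $Q$ is differentiable, and since $g$ is continuous this makes $g$ strictly increasing. Evaluating at $z_i<y<z_{i+1}$ and inserting the two endpoint identities then yields \eqref{eq:lm45}.

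The only delicate point — and the main obstacle — is regularity. For the DDDE \eqref{DQ-2} with discontinuous $k$, the derivative $Q'$ need not exist at the isolated points where $x$ or $x^\sharp$ crosses $0$, so $g$ is merely piecewise $C^1$. I would dispose of this by noting that both branches of \eqref{DQ-2} collapse to the same expression $g'(x)=k(x^\sharp)\phi(Q(x^\sharp))/(k(x)\phi(Q(x)))>0$, so that $g$ is continuous with strictly positive derivative off a locally finite set, and such a function is strictly increasing. Alternatively one could derive monotonicity of $g$ directly from the periodicity identity in Lemma \ref{lm1}, avoiding differentiation altogether, but the computation above is the most direct route.
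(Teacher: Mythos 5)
Your proposal is correct, and the computation checks out: multiplying \eqref{DQ} by $\ell/Q(x)^2$ does give
\begin{equation*}
g'(x)=1-\frac{\ell Q'(x)}{Q(x)^2}=\frac{k(x^\sharp)\,\phi(Q(x^\sharp))}{k(x)\,\phi(Q(x))}>0,
\end{equation*}
with both branches of \eqref{DQ-2} collapsing to the same expression, and the endpoint identities $g(z_i)=z_{i+1}$, $g(z_{i+1})=z_{i+2}$ follow from Definition~\ref{def:1}. However, your route is genuinely different from the paper's. The paper argues by contradiction using the periodicity property of Lemma~\ref{lm1}: if $y^\sharp\le z_{i+1}$ then $[y,y^\sharp]\subsetneq[z_i,z_{i+1}]$, so the strictly positive integrand $1/(k\phi(Q))$ gives $\int_y^{y^\sharp}<\int_{z_i}^{z_{i+1}}$, yet both integrals equal $t_p$; the case $y^\sharp\ge z_{i+2}$ is symmetric. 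That integral argument needs only continuity and positivity of $Q$ plus the constancy of $t_p$, so it sidesteps entirely the regularity issue you correctly flag as the delicate point of your differential approach (the kinks of $Q$ where $x$ or $x^\sharp$ crosses $0$). Your approach is the infinitesimal version of the same fact --- indeed the paper obtains \eqref{DQ} in the proof of Lemma~\ref{lm1} precisely by differentiating the identity $t_p=\int_x^{g(x)}dz/(k\phi(Q))$ --- and it buys a slightly stronger conclusion (strict monotonicity of the leader map everywhere, with an explicit formula for $g'$), at the cost of having to invoke the fact that a continuous function with positive derivative off a locally finite set is strictly increasing. Both proofs are sound; the paper's is marginally more economical given that Lemma~\ref{lm1} is already in place.
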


\begin{proof}
We prove by contradiction. 
We first assume that
\[
y^\sharp \le z_{i+1}, \qquad \mbox{therefore}\quad [y,y^\sharp] \subset [z_i,z_{i+1}] .
\]
By the periodic property in Lemma~\ref{lm1}, we have
\[ 
t_p = \int_{z_i}^{z_{i+1}} \frac{1}{k(z)\phi(Q(z))} dz  > 
\int_{y}^{y^\sharp} \frac{1}{k(z)\phi(Q(z))} dz=t_p,
\] 
a contradiction.  
We now assume 
\[
y^\sharp  \ge z_{i+2} \qquad \mbox{therefore}\quad 
[z_{i+1},z_{i+2}]\subset [y,y^\sharp]  .
\]
But again, the periodic property in Lemma~\ref{lm1} implies
\[
t_p = \int_{z_{i+1}}^{z_{i+2}} \frac{1}{k(z)\phi(Q(z))} dz  <
\int_{y}^{y^\sharp} \frac{1}{k(z)\phi(Q(z))} dz=t_p,
\]
again a contradiction.  
Thus, we conclude~\eqref{eq:lm45}, completing the proof. 
\end{proof}

\bigskip

We now establish the invariant regions $Q(x)>\rho_-$ and $Q(x)<\rho_-$,
on $x<0$.

\begin{lemma}\label{lm5}
Let $k(x)$ be the step function in~\eqref{eq:V0}, and let $Q(x)$ be a 
profile that satisfies~\eqref{DQ-2} with 
\[
\lim_{x\to\infty} Q(x) = \rho_+, \qquad  \mbox{where} 
\quad
\bar f = f_+(\rho_+).
\]
Let $\rho^*$ be the unique stagnation point where $f_-'(\rho^*)=0$, and 
$\rho_-<\rho^*$ be the value that satisfies $f_-(\rho_-)=\bar f$. 
Denote the interval 
\[ I=[y,y^\sharp] \qquad \mbox{where}\quad  y^\sharp=y+\ell/Q(y) \le 0.\] 
Then,  the followings hold.
\begin{itemize}
\item[(a)] 
If $f_-(Q(x)) > \bar f$ and $Q(x)>\rho_-$ for $x\in I$, then 
the same holds for all $x\le y$.
\item[(b)] 
If $f_-(Q(x)) <\bar f$ and $Q(x)<\rho_-$ for $x\in I$, then 
the same holds
for all $x\le y$.
\end{itemize}
In both cases, we have 
\[ \lim_{x\to-\infty} Q(x) = \rho_-.\]
\end{lemma}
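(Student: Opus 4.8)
The plan is to first confine the backward evolution to the region where $k\equiv V_-$, then propagate the one-sided bounds by a first-touch argument built on the averaging identity~\eqref{ss5}, and finally read off the limit from the trapped dynamics.

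First I would show that once $y^\sharp\le 0$ the entire backward solution stays in $x<0$, so that~\eqref{DQ-2} reduces to~\eqref{DW} with $V=V_-$. Writing $G(x)=x^\sharp=x+\ell/Q(x)$ and differentiating gives $G'(x)=1-\ell Q'(x)/Q(x)^2$; substituting the first branch of~\eqref{DQ-2} yields $G'(x)=\phi(Q(x^\sharp))/\phi(Q(x))>0$, since $0<\phi<1$ on $(0,1)$. Hence $x\mapsto x^\sharp$ is strictly increasing, so $x\le y$ forces $x^\sharp\le y^\sharp\le 0$, the equation is~\eqref{DW} on all of $(-\infty,y]$, and the integrand in~\eqref{ss5} is legitimate there. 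Solvability of the backward problem by the method of steps is then clear as long as $Q$ stays in a compact subset of $(0,1)$, which the invariant-region bound will supply.

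Next I would prove the backward invariance in case (a) by contradiction, treating the two boundary levels separately. Let $\tilde\rho_->\rho^*$ be the second root of $f_-(\rho)=\bar f$, so that $\{f_-(Q)>\bar f\}\cap\{Q>\rho_-\}=\{\rho_-<Q<\tilde\rho_-\}$. Suppose $Q$ meets the lower level, and set $x_0=\sup\{x<y: Q(x)=\rho_-\}$; by continuity $Q(x_0)=\rho_-$ and $Q>\rho_-$ on $(x_0,y]$, hence, together with the hypothesis on $I$, on all of $(x_0,y^\sharp]$. Since $x_0<y$, the monotonicity of $G$ gives $x_0^\sharp<y^\sharp$, so $[x_0,x_0^\sharp]\subset[x_0,y^\sharp]$. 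Evaluating~\eqref{ss5} at $x_0$, the left side vanishes because $f_-(\rho_-)=\bar f$, while $Q>\rho_-$ on $(x_0,x_0^\sharp]$ together with the monotonicity of $\rho\mapsto 1/\phi(\rho)$ makes the right side strictly positive, a contradiction. The same computation at the upper level $\tilde\rho_-$ (where again $f_-=\bar f$) produces a strictly negative right side, so $Q$ cannot reach $\tilde\rho_-$ either. Thus $\rho_-<Q(x)<\tilde\rho_-$, i.e. $f_-(Q)>\bar f$ and $Q>\rho_-$, for all $x\le y$. Case (b) is identical with the inequalities reversed, trapping $Q$ in $(0,\rho_-)$.

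Finally, for the limit I would use that $Q$ now solves the constant-speed equation~\eqref{DW} with $V=V_-$ on $(-\infty,y]$ and is trapped in a fixed compact subinterval. If a finite limit $\rho_\infty=\lim_{x\to-\infty}Q(x)$ exists, then $f_-(\rho_\infty)=\ell/t_p=\bar f$ by Lemma~\ref{lm:2}(ii), so $\rho_\infty\in\{\rho_-,\tilde\rho_-\}$, and the trapping forces $\rho_\infty=\rho_-$ in both cases. The delicate, and I expect the main, obstacle is precisely to establish that the limit exists and equals the lower root rather than $\tilde\rho_-$: this is an asymptotic/stability statement, since $\rho_-$ sits on the increasing branch of $f_-$ and is a sink for the leftward flow while $\tilde\rho_-$ on the decreasing branch is a source. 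I would resolve it by invoking the monotone-convergence analysis of~\eqref{DW} from~\cite{ShenKarim2017} (the equation for $x\le y$ is literally~\eqref{DW}), or, to be self-contained, by using the averaged-monotonicity Remark after Lemma~\ref{lm4} to show that $Q$ is eventually monotone and hence convergent, after which Lemma~\ref{lm:2}(ii) selects $\rho_-$.
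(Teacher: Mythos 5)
Your reduction to the constant-coefficient equation (via the strict monotonicity of $x\mapsto x^\sharp$, computed as $G'(x)=\phi(Q(x^\sharp))/\phi(Q(x))>0$) is a clean substitute for the paper's Lemma~\ref{lm4.5}, and your invariant-region argument is essentially the paper's: a rightmost-crossing point at either level $\rho_-$ or $\tilde\rho_-$ makes the left side of~\eqref{ss5} vanish while the sign of the integrand forces the right side to be strictly nonzero. Up to that point the proposal is correct and matches the paper.

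The genuine gap is the final step, and you have correctly located it but not closed it. Neither of your two proposed resolutions works as stated. The Remark after Lemma~\ref{lm4} gives only an \emph{averaged} monotonicity statement ($f_-(Q(x))>\bar f$ implies the mean of $1/\phi(Q)$ over $[x,x^\sharp]$ exceeds $1/\phi(Q(x))$); this does not imply that $Q$ is eventually pointwise monotone, and the paper's own proof explicitly allows for the contrary, devoting a separate argument to the case where $Q$ oscillates on every leader-interval $I_k=[z_k,z_{k+1}]$. Likewise, the analysis in \cite{ShenKarim2017} concerns the monotone profile $W$ with prescribed asymptotes, not an arbitrary (possibly oscillatory) backward solution trapped in $(\rho_-,\tilde\rho_-)$, so it cannot simply be invoked. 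What the paper actually does is an envelope argument: set $M_k=\max_{x\in I_k}1/\phi(Q(x))$, attained at $y_k$. In the oscillatory case $y_k$ is an interior local maximum, so $Q'(y_k)=0$ forces $Q(y_k)=Q(y_k^\sharp)$ with $y_k^\sharp\in I_{k+1}$, and applying~\eqref{ss5} on $[y_k,y_k^\sharp]$ produces a local maximum in $I_{k+1}$ exceeding $Q(y_k)$ together with the quantitative bound $M_{k+1}-M_k\ge \mathcal{O}(1)\,(Q(y_k)-\rho_-)>0$. Monotonicity and boundedness of $\{M_k\}$ then give summability of the increments, hence $Q(y_k)\to\rho_-$, hence $M_k\to 1/\phi(\rho_-)$, which squeezes $Q(x)\to\rho_-$ as $x\to-\infty$; a symmetric argument handles the lower envelope. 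Without some such mechanism your proof establishes only that \emph{if} the limit exists it equals $\rho_-$, which is strictly weaker than the lemma.
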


\begin{proof}
We only prove (a), while the proof for (b) is  similar. 
The proof is achieved by contradiction. 
Suppose that $f_-(Q(x)) > \bar f$ and  $Q(x) > \rho_-$ on $x\in I$. 
First, we assume that  $Q(x)$ can be less than $\rho_-$  for $x\le y$. 
Let $\bar y$ be the right most point where $Q(x)$ crosses $\rho_-$,
such that
\bel{eq:v} 
Q(\bar y)=\rho_-, \qquad Q(x) > \rho_- \quad \mbox{for} \quad
x>\bar y.
\eeq
Now~\eqref{ss5} implies that the ``average'' value of $Q(x)$ on the 
interval $[\bar y, \bar y+\ell/Q(\bar y)]$ is $\rho_-$.
Clearly, this contradicts~\eqref{eq:v}.
On the other hand, we assume that $Q(x)$ can be bigger than 
$\hat \rho$ where $f_-(\hat \rho)=\bar f$ and $\hat \rho >\rho^*$. 
Let $\hat y$ be the right most point where $Q(x)$ crosses $\hat\rho$,
such that
\[
Q(\hat y)=\hat\rho, \qquad Q(x) < \hat\rho \quad \mbox{for} \quad
x>\hat y.
\]
Again, this contradicts~\eqref{ss5}, proving (a).

To prove the asymptotic limit, let $\{z_i\}$ be a distribution of car position generated
by $Q(x)$ with $z_0 = y^\sharp$, and denote the interval
$I_k=[z_k,z_{k+1}]$. 
Let
\[ M_k \;\dot=\; \max_{x\in I_k}  \frac{1}{\phi(Q(x))}, \quad  k\le -2,\]
and let $\{y_k\}$ be the points where these maxima are attained:
\[  \frac{1}{\phi(Q(y_k))} = M_k, \quad  k\le -2.\]
We claim that 
\begin{equation}\label{Mk}
M_{k+1} - M_{k} \ge \mathcal{O}(1) \cdot (Q(y_k)-\rho_-), \qquad \mbox{for}~ k<-2,
\end{equation}
which implies that 
\[ \lim_{k\to-\infty} M_k = \frac{1}{\phi(\rho_-)}, \qquad \mbox{and}\quad 
\lim_{x\to-\infty} Q(x) = \rho_-.\]
Indeed, if $Q(x)$ is monotone on $I_k$ for some $k\le -2$, 
then $Q(x)$  must be monotone increasing on $I_k$
due to~\eqref{ss5}.  
An induction argument shows that $Q(x)$ is monotone on $x\le z_k$.
Then
\[  y_k = z_{k+1}, \qquad  M_k= 1/\phi(Q(z_{k+1})).\]
Now, ~\eqref{ss5} gives
\[
\frac{\ell}{\bar f} -\frac{\ell}{f_-(Q(z_k))} \le \frac{z_{k+1}-z_k}{V^-} 
\cdot
\left[\frac{1}{\phi(Q(z_{k+1}))} - \frac{1}{\phi(Q(z_{k}))}\right]
=\frac{\ell (M_k-M_{k-1})}{V_-Q(z_k)},
\]
which implies
\[
M_{k}-M_{k-1} 
\ge V_-Q(z_k) \left(\frac{1}{f_-(\rho_-)} - \frac{1}{f_-(Q(z_k))}\right)
= \mathcal{O}(1) \cdot (Q(y_{k-1})-\rho_-).
\]

Now consider the case where $Q(x)$  is not monotone 
on any interval $I_k$, such that  $x\mapsto 1/\phi(Q(x))$ is oscillatory 
with at least one local minimum or local maximum  on any $I_k$
  for $k\le -2$ .  
Then, generically for some index $k<-2$,  
$M_k$ is attained at a local maximum of $1/\phi(Q(x))$, say $y_k\in I_k$.
Then $y_k$ is the local maximum of $Q(x)$  on $I_k$, with $Q'(y_k)=0$. 
Denoting its leader as $y^\sharp_k$, we have 
$y^\sharp_k \in I_{k+1}$ by Lemma~\ref{lm4.5}.
Also, $Q'(y_k)=0$ implies that  $Q(y_k)=Q(y_k^\sharp)$.
Then~\eqref{ss5} implies that there exists a local maximum 
$y'_{k+1} \in ( z_{k+1}, y^\sharp_k)$ with $Q(y'_{k+1})>Q(y_k)$.  
See Figure~\ref{fig:explain} for an illustration.

\begin{figure}[htbp]
\begin{center}
\setlength{\unitlength}{0.85mm}
\begin{picture}(60,38)(-3,-5) 
\multiput(0,0)(0,1){30}{\line(0,1){0.5}}\put(0,-4){$z_k$}
\multiput(25,0)(0,1){30}{\line(0,1){0.5}}\put(22,-4){$z_{k+1}$}
\multiput(55,0)(0,1){30}{\line(0,1){0.5}}\put(52,-4){$z_{k+2}$}
\multiput(5,20)(1,0){50}{\line(1,0){0.5}}
\multiput(10,5)(0,1){20}{\line(0,1){0.5}}\put(7,2){$y_k$}
\multiput(40.5,6)(0,1){25}{\line(0,1){0.5}}\put(37,1){$y'_{k+1}$}
\multiput(50.5,10)(0,1){13}{\line(0,1){0.5}}\put(48,4){$y^\sharp_{k}$}
\thicklines
\qbezier(0,10)(10,30)(20,10)
\qbezier(20,10)(25,2)(28,10)
\qbezier(28,10)(40,50)(55,10)
\end{picture}
\caption{Graph of $Q(x)$ on the interval $[z_k,z_{k+2}]$. 
Illustration of the locations for $y_k,y^\sharp_k$ and $y'_{k+1}$, used in the proof of Lemma~\ref{lm5}.}
\label{fig:explain}
\end{center}
\end{figure}
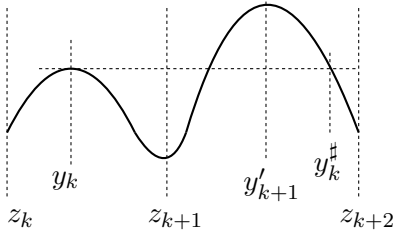

Furthermore, applying~\eqref{ss5} on $[y_k,y_k^\sharp]$ we get
\begin{eqnarray*}
\frac{\ell}{f_-(\rho_-)} - \frac{\ell}{f_- (Q(y_k))} 
&=& \frac{1}{V_-}\int_{y_k}^{y_k^\sharp} \left[ \frac{1}{\phi(Q(z))} 
-\frac{1}{\phi(Q(y_k))}\right] dz\\
&<&   \frac{1}{V_-} \cdot \frac{\ell}{Q(y_k)} \cdot 
\left[\frac{1}{\phi(Q(y'_{k+1}))} 
-M_k \right].
\end{eqnarray*}
Since $M_{k+1} \ge \frac{1}{\phi(Q(y'_{k+1}))} $, this gives
\[ 
M_{k+1}-M_{k} 
>
V_- Q(y_k)  \left[\frac{1}{f_-(\rho_-)} - \frac{1}{ f_-(Q(y_k))}  \right]
=\mathcal{O}(1) \cdot \left[Q(y_{k})-\rho_-\right] ,
\]
%
 completing the proof.
\end{proof}

\bigskip

\begin{lemma}\label{lm3}
(\textbf{Ordering of the profiles})
Assume that there exist multiple profiles that solve the equation~\eqref{DQ-2} 
with  asymptotes $\rho_\pm$ that satisfies~\eqref{RH}. 
Then the graphs of these profiles never intersect. 
\end{lemma}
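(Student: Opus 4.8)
The plan is to argue by contradiction. Suppose $Q_1$ and $Q_2$ are two distinct profiles solving~\eqref{DQ-2} with the common asymptotes $\rho_\pm$ of~\eqref{RH}, and suppose their graphs meet at some point, say $Q_1(x_0)=Q_2(x_0)=:q$. Write $w(x)=Q_1(x)-Q_2(x)$, so $w(x_0)=0$ and $w\to 0$ as $x\to\pm\infty$. The first fact I would record is that at a contact point the two leaders coincide: since $Q_1(x_0)=Q_2(x_0)$, both profiles share the same $x_0^\sharp=x_0+\ell/q$. This alignment of the delay argument is exactly what makes a comparison tractable, and it is the feature I would exploit throughout.

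The core step uses the common period. By Lemmas~\ref{lm1} and~\ref{lm:2} both profiles have period $t_p=\ell/\bar f$, so
\[
\int_{x_0}^{x_0^\sharp}\frac{dz}{k(z)\,\phi(Q_1(z))}
=t_p=
\int_{x_0}^{x_0^\sharp}\frac{dz}{k(z)\,\phi(Q_2(z))}.
\]
Subtracting (this is~\eqref{ss-a} for each profile, with the common left-hand side $\ell/\bar f-\ell/f(k(x_0),q)$ cancelling) gives
\[
\int_{x_0}^{x_0^\sharp}\frac{1}{k(z)}\left[\frac{1}{\phi(Q_1(z))}-\frac{1}{\phi(Q_2(z))}\right]dz=0.
\]
Since $\rho\mapsto 1/\phi(\rho)$ is strictly increasing, the bracket has the same sign as $w(z)$. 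Hence if $w$ keeps a single sign on the whole leader interval $[x_0,x_0^\sharp]$, the vanishing integral forces $w\equiv 0$ there; by backward well-posedness of the method of steps this equality propagates to $(-\infty,x_0^\sharp]$, and, extending to the right by the same dichotomy at the edge of the agreement set, yields $Q_1\equiv Q_2$, contradicting distinctness. Thus the only surviving possibility is that $w$ changes sign inside every such leader interval.

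It remains to rule out this \emph{transversal} case, which I expect to be the main obstacle. Here I would first note that a sign change on $(x_0,x_0^\sharp)$ produces a new contact $x_1\in(x_0,x_0^\sharp)$; iterating (and stopping at once, with a contradiction, if any leader interval ever yields a one-signed $w$) generates a strictly increasing sequence of contacts $x_0<x_1<x_2<\cdots$ with $x_{n+1}\in(x_n,x_n^\sharp)$. Differentiating~\eqref{DQ-2} at the aligned leader and using $\phi'<0$ shows that at each contact the sign of $w'(x_n)$ equals the sign of $w(x_n^\sharp)$, which pins down the alternation of signs along the sequence. I would then eliminate the two ways such a sequence can behave. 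If it accumulates at a finite $\bar x$, then $\bar x$ is a zero of $w$ with $w'(\bar x)=0$, and the oscillation can be reduced, via the integral identity on $[\bar x,\bar x^\sharp]$, to the one-signed case already excluded. If instead it escapes to $+\infty$, I would invoke that for $x\ge 0$ both profiles solve the constant-coefficient equation~\eqref{DW} with speed $V_+$, whose solutions approach $\rho_+$ monotonically at a definite exponential rate (from the companion analysis), so that $w$ is eventually of one sign near $+\infty$ and cannot admit contacts marching to $+\infty$. The delicate point, and the step I would spend the most care on, is precisely this exclusion of infinitely many transversal contacts—controlling the finite-accumulation scenario and importing the sharp end asymptotics (using also the invariant-region estimates of Lemma~\ref{lm5}) to force $w$ to settle to a single sign.
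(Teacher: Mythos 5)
Your central mechanism is exactly the one the paper uses: at a contact point the two profiles share the same leader position $x_0^\sharp=x_0+\ell/q$, both have the common period $t_p=\ell/\bar f$ by Lemmas~\ref{lm1} and~\ref{lm:2}, and since $\rho\mapsto 1/\phi(\rho)$ is strictly increasing the identity
\[
\int_{x_0}^{x_0^\sharp}\frac{1}{k(z)}\left[\frac{1}{\phi(Q_1(z))}-\frac{1}{\phi(Q_2(z))}\right]dz=0
\]
is incompatible with $Q_1-Q_2$ keeping a strict sign on $(x_0,x_0^\sharp)$. The paper stops there: its contradiction hypothesis is a contact point $y$ with $Q_1>Q_2$ on all of $x>y$, which exists whenever two distinct profiles meet because on $x\ge 0$ every profile coincides with a horizontal shift of the strictly monotone $W$ or with the constant $\rho_+$, so distinct profiles are strictly ordered there and the (closed) set of contact points is bounded above; one takes $y$ to be its maximum. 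A single application of the period identity on $[y,y^\sharp]$ then gives $t_{p,1}>t_{p,2}$, a contradiction.

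Where you diverge is in not securing this eventual ordering first, which forces you into the ``transversal'' branch, and that branch is where the genuine gaps are. First, in the one-signed case, $w\equiv 0$ on $[x_0,x_0^\sharp]$ does not propagate by ``backward well-posedness'': the method of steps determines $Q$ on $x<0$ from data on all of $x\ge 0$, since $Q'(x)$ depends on the forward value $Q(x^\sharp)$; agreement on a single leader interval by itself determines nothing to its left or its right. Second, in the finite-accumulation sub-case, a zero $\bar x$ of $w$ with $w'(\bar x)=0$ does not make $w$ one-signed on $[\bar x,\bar x^\sharp]$, so the claimed reduction to the already-excluded case does not follow. Third, the escape-to-$+\infty$ sub-case is resolved only by importing the ordering of solutions of~\eqref{DW} near $+\infty$ --- but once you grant that, the entire case analysis is unnecessary: distinct shifts of $W$ (or a shift of $W$ versus the constant $\rho_+$) never meet on $x\ge 0$, so the contact set lies in $x<0$ and has a maximum, at which your integral identity applies once and finishes the proof. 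I would restructure the argument that way; as written, the added branches are the hardest part of your plan and the only part that is not actually carried out.
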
 

\begin{proof}
We prove by contradiction. 
Assume that there exist two profiles $Q_1(x),Q_2(x)$ 
which intersect at a point $y$, such that 
\[
 Q_1(y)=Q_2(y), \qquad  Q_1(x) > Q_2(x) ~~\mbox{for}~ x> y.
\]
Let 
\[y^\sharp \;\dot=\;y + \frac{\ell}{ Q_1(y)}=y + \frac{\ell}{ Q_2(y)}
\]
be the position of the leader 
for the car at $y$ for both profiles, and let $t_{p,1}$ and $t_{p,2}$ be the times
for the car at $y$ to reach its leader's position at $y^\sharp$, 
tracing along $Q_1(x)$ and $ Q_2(x)$, respectively. 
Then 
\[
 t_{p,1} = \int_{y}^{y^\sharp} \frac{1}{k(x) \phi(Q_1(x))} dx~ >~
\int_{y}^{y^\sharp} \frac{1}{k(x) \phi(Q_2(x))} dx = t_{p,2}.
\]
Since both profiles $Q_1,Q_2$ approach the same asymptotic limits, 
by Lemma~\ref{lm:2} one must have $t_{p,1}=t_{p,2}$,
a contradiction. 
\end{proof}

\section{Case 1: $V_->V_+$.}
\setcounter{equation}{0}

In this section we consider the case where the speed limit has a downward jump at $x=0$. 
Recall the Rankine-Hugoniot jump condition~\eqref{RH}. 
Fix a $\bar f$, with
\[
0<\bar f \le f_+(\rho^*), \qquad \mbox{where}\quad  f_-'(\rho^*)=f_+'(\rho^*)=0,
\]
and let $\rho_1^-,\rho_2^-,\rho_1^+,\rho_2^+$ be the unique values that satisfy
\bel{fbar}
f_-(\rho_1^-)=f_-(\rho_2^-)=f_+(\rho_1^+)=f_+(\rho_2^+)=\bar f,
\quad \mbox{and}~
\rho_1^- < \rho_1^+ \le \rho^* \le \rho_2^+ < \rho_2^- .
\eeq
See Figure~\ref{fig:rhos} for an illustration. 
Note that we may have $\rho_1^+ = \rho^* =\rho_2^+$ when $\bar f = f_+(\rho^*)$.

\begin{figure}[htbp]
\begin{center}
\setlength{\unitlength}{0.85mm}
\begin{picture}(60,50)(-3,-5)  
\put(0,0){\vector(1,0){60}}\put(60,-2){$\rho$}
\put(0,0){\vector(0,1){40}}
\multiput(0,15)(2,0){24}{\line(1,0){1}}\put(-5,13){$\bar f$}
\multiput(5.2,15)(0,-2){8}{\line(0,-1){1}}\put(2,-4){ $\rho^-_1$}
\multiput(12.5,15)(0,-2){8}{\line(0,-1){1}}\put(11,-4){ $\rho_1^+$}
\multiput(25,40)(0,-2){20}{\line(0,-1){1}}\put(22,-4){ $\rho^*$}
\multiput(37.5,15)(0,-2){8}{\line(0,-1){1}}\put(33,-4){ $\rho^+_2$}
\multiput(44.7,15)(0,-2){8}{\line(0,-1){1}}\put(43,-4){ $\rho^-_2$}
\put(25.5,22){$f_+$}\put(25.5,36){$f_-$}
\thicklines
\qbezier(0,0)(25,40)(50,0)
\qbezier(0,0)(25,80)(50,0)
\end{picture}
\caption{Flux functions $f_-, f_+$, and the locations of 
$\rho^-_1,\rho^+_1,\rho^-_2,\rho^+_2$, and $\rho^*$.}
\label{fig:rhos}
\end{center}
\end{figure}

There are 4 possible combinations of $(\rho_-,\rho_+)$ which satisfy~\eqref{fbar}:
\begin{itemize}
\item[1A.] $(\rho_-,\rho_+)=(\rho_1^-,\rho_2^+)$, i.e., 
$0<\rho_- < \rho^* < \rho_+<1$;
\item[1B.] $(\rho_-,\rho_+)=(\rho_1^-,\rho_1^+)$, i.e., 
$0<\rho_- < \rho_+ \le \rho^*$;
\item[1C.] $(\rho_-,\rho_+)=(\rho_2^-,\rho_2^+)$, i.e., $ \rho^* < \rho_+<\rho_-<1$;
\item[1D.] $(\rho_-,\rho_+)=(\rho_2^-,\rho_1^+)$, i.e., $ 0<\rho_+\le\rho^* < \rho_-<1$.
\end{itemize} 

We denote by $W(x)$ the unique stationary profile that satisfies~\eqref{DW}, 
with 
\bel{W+}
   W(0)=\rho^*, \quad 
   \lim_{x\to-\infty} W(x) = \rho_1^+,
   \quad \lim_{x\to+\infty} W(x) = \rho_2^+.
\eeq
Note that any horizontal shifts of $W(x)$ is again a solution of~\eqref{DW}.
The existence and uniqueness of such a profile is proved
in~\cite{ShenKarim2017}.

We also recall Lemma~2.5 in~\cite{ShenKarim2017}, 
where the following is proved:
\begin{itemize}
\item As  $x\to\infty$,  $Q(x)$ can approach $\rho_+$ 
asymptotically with exponential rate 
only if $\rho_+ > \rho^*$. This means, if $\rho_+ \le \rho^*$, the
asymptote is unstable.
\item As  $x\to-\infty$,  $Q(x)$ can approach $\rho_-$ 
asymptotically with exponential rate 
only if $\rho_- < \rho^*$. This means, if $\rho_- \ge \rho^*$, the
asymptote is unstable.
\end{itemize}

We discuss each sub-case in  detail in the rest of this section.

\subsection{Case 1A:  $0 <\rho_- < \rho^* <  \rho_+<1$.}

Since here $\rho_+>\rho^*$ is a stable asymptote,   on $x>0$ the solution for
$Q(x)$ must be either some horizontal shift of $W(x)$
or the trivial solution $Q(x) \equiv \rho_+$. 
For different horizontal shifts, these profiles take different values of $Q(0)$.
In all cases, we have 
\[
 \rho_1^+ < Q(0) \le \rho_+.
 \]
 
\subsubsection{The  initial value problems.}
Once $Q(x)$ is given for $x\ge 0$, one can solve~\eqref{DQ-2} 
backward in $x$ as an ``initial value problem''. 
It is understood that the derivative in~\eqref{DQ-2} is the left derivative,
as one solves the equations backward in $x$.
The profile $Q(x)$, if exists,  can have kinks, but remains continuous. 
Next Theorem provides well-posedness of this initial value problem.  

\begin{theorem}\label{tm:IC} (\textbf{Well-posedness of the initial value problems})
Let $V_->V_+$. 
Given $\rho_+$ such that $\rho^*<\rho_+ <1$.  
Consider the initial value problem for ~\eqref{DQ-2},
where an initial data is given on $x\ge0$, 
as either  a horizontal shift of $W(x)$ 
or the constant function $\rho_+$. 
Then, the initial value problem 
has a unique monotone solution $Q(x)$ on $x<0$.
\end{theorem}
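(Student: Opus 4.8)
The plan is to solve the DDDE~\eqref{DQ-2} backward in $x$ by the method of steps, starting from the prescribed data on $x\ge 0$. The key structural fact is that the retarded argument $x^\sharp=x+\ell/Q(x)$ always lies to the \emph{right} of $x$; hence when I integrate backward (decreasing $x$) the value $Q(x^\sharp)$ has already been determined, so on each step the functional equation~\eqref{DQ-2} collapses to an ordinary differential equation whose right-hand side is an explicit function of $(x,Q(x))$. Introducing the leader map $L(x)=x+\ell/Q(x)$, I would first locate the unique point $a_1<0$ with $L(a_1)=0$: since $L(0)=\ell/Q(0)>0$ while $L(x)$ becomes negative for $x$ sufficiently negative (the density staying bounded below), such a point exists by continuity, and is unique once $L$ is known to be strictly increasing. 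This $a_1$ is the \emph{only} place where the regime of~\eqref{DQ-2} changes: for $x\in(a_1,0)$ one has $x<0<x^\sharp$ (the second line), while for $x<a_1$ one has $x^\sharp<0$ and the equation reverts to the uniform form~\eqref{DW} with $k\equiv V_-$.

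On the first step $(a_1,0)$ the leader sits in $x\ge0$, so $Q(x^\sharp)$ is read off directly from the data and the equation becomes
\[
Q'(x)=\frac{Q(x)^2}{\ell V_-\,\phi(Q(x))}\Big[V_-\phi(Q(x))-V_+\,\phi(Q(x^\sharp))\Big],
\qquad x^\sharp=x+\frac{\ell}{Q(x)}\ge0,
\]
which has a locally Lipschitz right-hand side as long as $Q$ stays in a compact subset of $(0,1)$, so Picard--Lindel\"of gives a unique solution integrated backward from $Q(0)$. The crucial point is the transition at $x=a_1$, where $x^\sharp$ crosses $0$ and $k(x^\sharp)$ jumps from $V_+$ to $V_-$, making the right-hand side discontinuous in $x$. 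Here I would verify the transversality condition by a direct computation: writing $g(x)=x^\sharp=x+\ell/Q(x)$, each branch of~\eqref{DQ-2} yields
\[
g'(x)=\frac{V_+\phi(Q(x^\sharp))}{V_-\phi(Q(x))}\quad(x<0<x^\sharp),
\qquad
g'(x)=\frac{\phi(Q(x^\sharp))}{\phi(Q(x))}\quad(x^\sharp<0),
\]
both strictly positive. Thus $g$ decreases through $0$ as $x$ decreases past $a_1$, the trajectory crosses the jump surface $\{x^\sharp=0\}$ transversally from one side to the other, no sliding mode is possible, and the solution continues uniquely across $a_1$ (with only a kink, $Q$ remaining continuous). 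This is the step I expect to be the main obstacle, as it is precisely the discontinuous-ODE issue flagged in the introduction; the same inequality $g'=L'>0$ also supplies the strict monotonicity of $L$ used above.

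For $x<a_1$ the equation is the uniform delay equation~\eqref{DW}, which I would handle by continuing the method of steps: the delay $x^\sharp$ lands first in $(a_1,0)$ and then in the successively constructed $L^{-1}$-preimages of $[a_1,0]$, on each of which the problem is an ODE with Lipschitz right-hand side, so existence and uniqueness follow from standard theory and coincide with the uniform-road analysis of~\cite{ShenKarim2017}. To keep this iteration from breaking down I would maintain an a priori invariant region via Lemma~\ref{lm5}, which confines $Q$ to a compact subinterval of $(0,1)$ bounded away from the singular value $Q=1$ at which $\phi$ vanishes, thereby guaranteeing the Lipschitz bound at every step. Finally, monotonicity is obtained by tracking the sign of $Q'$: at $x=0^-$ the bracket in the second line of~\eqref{DQ-2} is positive because $V_->V_+$ and the data is nondecreasing, so $Q'(0^-)>0$; the averaged-monotonicity identity of Lemma~\ref{lm4} then prevents $Q'$ from changing sign, since a first interior zero of $Q'$ would force an ``average'' value of $Q$ equal to the local value and contradict strict monotonicity on the preceding interval, yielding a monotone profile on all of $x<0$.
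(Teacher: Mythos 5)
Your strategy coincides with the paper's: backward method of steps, with well-posedness across the discontinuity secured by a transversality check on the curve where $x^\sharp=0$. Your computation $g'(x)=V_+\phi(Q(x^\sharp))/\bigl(V_-\phi(Q(x))\bigr)>0$ (and $g'=\phi(Q(x^\sharp))/\phi(Q(x))$ in the uniform regime) is equivalent to the paper's condition $Q'(y\pm)<h'(y)$ along $\mathcal{C}_1=\{(x,-\ell/x)\}$, since $g'=1-\ell Q'/Q^2$; that part is fine. Two of your justifications, however, do not work as written. The monotonicity step is the more serious one: you claim a first interior zero of $Q'$ would ``force an average value of $Q$ equal to the local value'' via Lemma~\ref{lm4}. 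That lemma compares $f(k(x),Q(x))$ with $\bar f$, not $Q(y)$ with an average of $Q$, and at a critical point it yields no contradiction unless you separately know $f_-(Q(y))\neq\bar f$, which you do not. The argument that actually closes this step is pointwise, straight from the equation: at the first backward zero $y<0$ of $Q'$, the second line of~\eqref{DQ-2} forces $V_-\phi(Q(y))=V_+\phi(Q(y^\sharp))$, and since $V_->V_+$ with $\phi'<0$ this gives $Q(y)>Q(y^\sharp)$, contradicting the nondecrease of $Q$ on $(y,y^\sharp)$ that you already have from $Q'\ge0$ to the right of $y$ and the monotone data. No averaging identity is needed (and invoking one is delicate here anyway, since the period identity behind Lemma~\ref{lm4} presupposes the asymptote information of Lemma~\ref{lm:2}).

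The second issue is the invariant region. Lemma~\ref{lm5} applies only to intervals $I=[y,y^\sharp]$ with $y^\sharp\le0$ and only after sign conditions on $f_-(Q)-\bar f$ are already known on $I$, so it cannot supply the bound $0<Q<1$ on the first step $(a_1,0)$, where $y^\sharp>0$. The bounds there follow instead from the monotonicity just established, namely $Q(x)\le Q(0)\le\rho_+<1$ keeps $\phi(Q)$ away from $0$, together with the observation that $Q=0$ is a critical point of the right-hand side, so the solution cannot cross zero. With these two repairs your argument reproduces the paper's proof; the only remaining omission, the transversality at the vertical line $x=0$ itself, is the trivial check that $Q'(0\pm)$ is finite because $\phi(Q(0))>0$.
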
 

\begin{proof} 
The proof takes a couple of steps.

\textbf{Step 1.}  
In the $(x,Q)$ plane, let $\mathcal{C}_0$ be the vertical line where  $x=0$,
and let $\mathcal{C}_1$  be 
the graph of the function $h(x) = -\ell/x$, for $x<-\ell$. 
The curve $\mathcal{C}_1$  indicates the position and local density 
of the cars whose leader is at $x=0$. Since the car length is $\ell$, 
the position of these car must be less than $-\ell$, so $h(x)$ is only defined 
on $x<-\ell$.
The discontinuities in~\eqref{DQ-2} occur  along $\mathcal{C}_0$ and 
$\mathcal{C}_1$. 
To ensure the existence and uniqueness of solutions, 
we must verify that the vector field of the DDDE~\eqref{DQ-2} 
must cross the curves of discontinuity {\em transversally}, see~\cite{MR964856}. 

Along $\mathcal{C}_0$, the discontinuity line is vertical, with infinite tangent. 
Thus, we need that
\begin{equation}\label{eq:cc1}
Q'(0\pm) ~ \mbox{is bounded}. 
\end{equation}
This is easily verified from~\eqref{DQ-2},  since $Q(0)\le \rho_+ <1$ 
so  $\phi(Q(0)) >0$. 

Along the curve $\mathcal{C}_1$, the tangent at a point $(x,h(x))$ is
\[
h'(x) = \ell/x^2 = h(x)^2/\ell.
\]
Let $Q(x)$ be a profile that solves~\eqref{DQ-2}, 
and let $y <0 $ be its intersection point with $\mathcal{C}_1$ such that
$Q(y) = h(y)$. It suffices to show that
\bel{eq:cc} Q'(y\pm) <  h'(y).\eeq
Indeed, from~\eqref{DQ-2} we have
\begin{eqnarray*}
Q'(y-) &
=& \frac{h(y)^2}{\ell \cdot \phi(h(y))} \left[\phi(h(y)) - \phi(Q(0))\right]
~=~h'(y) \left[1-\frac{\phi(Q(0))}{\phi(h(y)) }\right]  ,\\
Q'(y+) &
=& \frac{h(y)^2}{\ell V_-\phi(h(y))} \left[V_-\phi(h(y)) - V_+\phi(Q(0))\right]
~=~h'(y)\left[1-\frac{V_-\phi(Q(0))}{V_+\phi(h(y)) }\right] .
\end{eqnarray*}
Thus~\eqref{eq:cc} holds since $Q(0)<1$ and $\phi(Q(0))>0$.

\medskip

\textbf{Step 2.}  Once the transversality properties~\eqref{eq:cc1}-\eqref{eq:cc}
 are established, 
the existence and uniqueness of the solution for $Q(x)$ is achieved 
by method of steps. 
Denote 
\[I_k = [-k \ell , -(k-1)\ell], \qquad  \mbox{for}~~ k=1,2,3,\cdots.\] 
Consider $I_1$.  If $x\in I_1$, then its leader $x^\sharp$ is located at
\[
 x^\sharp= x+ \ell/Q(x) > 0.
 \]
We have an ODE with discontinuous right hand side, with
\bel{eq:cd}
  Q'(x) = \frac{Q(x)^2}{\ell \cdot V_- \phi(Q(x))} 
   \left[  V_- \phi(Q(x)) - V_+\phi(Q(x^\sharp)) \right]
\eeq
where $Q(x^\sharp)$ is given by the initial data on $x\ge0$.
Standard theory for discontinuous ODEs  (see~\cite{MR964856})
gives a uniqueness solution on $I_1$, provided that $Q(x)$ 
satisfies $ 0 < Q(x) < 1$ on $I_1$. 
Indeed, the lower bound $Q(x)> 0$ is a consequence of the fact that
$0$ is a critical point. Assuming that $Q(x)$ becomes negative on some 
subset of $I_1$, then there exists a point $\hat x \in I_1$ such that
$Q(\hat x)=0$ and $Q'(\hat x)>0$. 
But this is not possible because by~\eqref{DQ-2} we have
\[Q'(\hat x) = \frac{Q^2(\hat x)}{\ell \phi(Q(\hat x))} \left[\phi(Q(\hat x))
-\phi(\rho_+) \right]=0, \quad \mbox{where}~\rho_+=\lim_{x\to\infty}Q(x).
 \]
To prove the upper bound, we claim that 
$Q'(x) >0$ on $I_1$.  We argue with contradiction. 
Assuming that $Q(x)$ is not monotone on $I_1$, then there exists a point
$y \in I_1$ such that 
\[ 
Q'(y)=0, \qquad  Q'(x) \ge 0 \quad\mbox{for}~~ x>y.
\]
Since $Q'(0-)>0$, then $y <0$, and we have 
\bel{eq:ce}
  Q(y) < Q(y^\sharp),\qquad y^\sharp =y+\ell/Q(y) >0.  
\eeq
Now~\eqref{eq:cd} and $Q'(y)=0$ imply
\[
V_- \phi(Q(y)) - V_+\phi(Q(y^\sharp)) =0.
\]
Since $V_->V_+$ and $\phi'<0$, we get 
\[Q(y)>Q(y^\sharp),\]
a contradiction  to~\eqref{eq:ce}. 

\medskip

\textbf{Step 3.}
We iterate the argument in Step 2 for $k=2,3,\cdots$, 
until $I_k$ crosses the curve $\mathcal{C}_1$. 
After that,~\eqref{eq:cd} is replaced by
\bel{eq:cd3}
   Q'(x) = \frac{Q(x)^2}{\ell \cdot \phi(Q(x))} 
   \left[  \phi(Q(x)) - \phi(Q(x^\sharp)) \right],\qquad 
   x^\sharp=x+\ell/Q(x)<0.
\eeq
The same argument follows.
This proves the existence and uniqueness of a monotone solution
$Q(x)$ on $x<0$, for the initial value problem.
\end{proof}

\bigskip

\subsubsection{The boundary value problems.} 
Next Corollary establishes the existence of infinitely 
many monotone profiles $Q(x)$ for the boundary value problem,
with given boundary conditions  $\rho_-$ and $\rho_+$
at $ \pm\infty$.

\begin{corollary}\label{cor:1}
Let 
\[
V_->V_+,\qquad
0 <\rho_- \le \rho^* \le \rho_+<1, \qquad 
f_-(\rho_-)=f_+(\rho_+).
\]
There exist infinitely many monotone  profiles $Q(x)$
which satisfy the DDDE~\eqref{DQ-2},
and the boundary conditions
\bel{eq:ll}\lim_{x\to-\infty} Q(x) = \rho_-,\qquad \lim_{x\to+\infty} Q(x) = \rho_+.\eeq
Moreover, these profiles never intersect with each other, and
\bel{eq:Q0} \rho_1^+ < Q(0)\le\rho_+.\eeq
\end{corollary}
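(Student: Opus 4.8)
The plan is to realize each profile as the backward-in-$x$ extension of suitable data prescribed on $x\ge 0$, and then to read off the two asymptotic limits from the technical lemmas of Section~2. Because $\rho_+=\rho_2^+>\rho^*$ is a stable asymptote, on $x\ge 0$ I prescribe the data to be either a horizontal shift $W(\,\cdot-a)$, $a\in\mathbb{R}$, or the constant $\rho_+$; on $x>0$ each of these solves the first line of~\eqref{DQ-2} (which there coincides with~\eqref{DW}) and satisfies $\lim_{x\to+\infty}Q(x)=\rho_2^+=\rho_+$ by~\eqref{W+}. Theorem~\ref{tm:IC} extends every such choice to a unique monotone solution on $x<0$, so I obtain a one-parameter family of global profiles, hence infinitely many; since $W$ is strictly increasing with $\rho_1^+<W<\rho_2^+$, the shifts give $Q(0)=W(-a)\in(\rho_1^+,\rho_+)$ and the constant datum gives $Q(0)=\rho_+$, which is exactly the range~\eqref{eq:Q0}. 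Distinct data give distinct $Q(0)$, so the profiles are genuinely distinct, and once the common left limit $\rho_-$ is established below, Lemma~\ref{lm3} shows their graphs never intersect.

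It remains to prove $\lim_{x\to-\infty}Q(x)=\rho_-$, which I deduce from Lemma~\ref{lm5}(a). Its hypothesis must be checked on an interval $I=[y,y^\sharp]$ with $y^\sharp\le 0$; I take $y=\hat x$, the point where $Q$ meets the curve $\mathcal{C}_1$, equivalently $\hat x^\sharp=0$, whose existence is furnished by the method-of-steps construction of Theorem~\ref{tm:IC}. On $I=[\hat x,0]$ we have $k\equiv V_-$, and monotonicity gives $Q(x)\le Q(0)\le\rho_+=\rho_2^+<\rho_2^-$; hence $f_-(Q(x))>\bar f$ will hold as soon as $Q(x)>\rho_-=\rho_1^-$ on $I$, and by monotonicity it suffices to show $Q(\hat x)>\rho_-$. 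This is the crux of the argument. I establish it by contradiction from identity~\eqref{ss-a} (equivalently~\eqref{ss5}) on $[\hat x,\hat x^\sharp]=[\hat x,0]$: if $Q(\hat x)\le\rho_-$ then $Q(\hat x)<\rho^*$ and $f_-(Q(\hat x))\le\bar f$, so the left-hand side $\ell/\bar f-\ell/f_-(Q(\hat x))\le 0$, while on the right-hand side the monotonicity of $Q$ together with that of $\rho\mapsto 1/\phi(\rho)$ makes the integrand nonnegative and positive on a set of positive measure (because $Q(0)>Q(\hat x)$), so the right-hand side is strictly positive, a contradiction. Thus $Q(\hat x)>\rho_-$, the hypothesis of Lemma~\ref{lm5}(a) holds on $I$, and the lemma yields $\lim_{x\to-\infty}Q(x)=\rho_-$. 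Combined with the right asymptote this is the boundary condition~\eqref{eq:ll}.

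I expect the verification $Q(\hat x)>\rho_-$ to be the main obstacle: it is precisely the point at which the transition region near $x=0$, where~\eqref{DQ-2} carries its mixed $V_-/V_+$ form, must be connected to the pure-$f_-$ regime in which Lemma~\ref{lm5} operates. Everything else is bookkeeping that assembles Theorem~\ref{tm:IC}, Lemma~\ref{lm3}, and Lemma~\ref{lm5}.
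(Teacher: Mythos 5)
Your proposal is correct, and its skeleton coincides with the paper's: Theorem~\ref{tm:IC} supplies the backward extension of each shift of $W$ (or of the constant $\rho_+$), the range of $W(0)$ over the shifts gives \eqref{eq:Q0}, and Lemma~\ref{lm3} gives the non-intersection once the common asymptotes are known. Where you genuinely diverge is in establishing the left boundary condition. The paper's argument is shorter: the solution from Theorem~\ref{tm:IC} is monotone and bounded, so $\lim_{x\to-\infty}Q(x)$ exists; Lemma~\ref{lm:2}(i) fixes the period $t_p=\ell/\bar f$ from the right asymptote, Lemma~\ref{lm:2}(ii) then forces the left limit to satisfy $f_-(\cdot)=\bar f$, and it must be the root $\rho_-\le\rho^*$ (the paper invokes stability of the asymptote; alternatively, monotonicity already gives $\lim_{x\to-\infty}Q\le Q(0)\le\rho_2^+<\rho_2^-$). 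You instead verify the invariant-region hypothesis of Lemma~\ref{lm5}(a) on the leader-interval $[\hat x,0]$ abutting the discontinuity, which requires your contradiction argument via \eqref{ss5} to show $Q(\hat x)>\rho_-$; that argument is sound, since under $Q(\hat x)\le\rho_-$ the left side of \eqref{ss5} is $\le 0$ while the right side is strictly positive by monotonicity of $Q$ and of $\rho\mapsto 1/\phi(\rho)$ together with $Q(0)>Q(\hat x)$. Your route costs this extra verification --- which you correctly flag as the crux --- but it buys independence from the stability classification of asymptotes quoted from the companion paper, and it exercises exactly the machinery (Lemma~\ref{lm5} plus the crossing of $\mathcal{C}_1$) that the paper needs anyway in the non-monotone Case 2A. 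In short: same existence and ordering arguments, a different and somewhat heavier but valid identification of the limit at $-\infty$.
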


\begin{proof}
In Theorem~\ref{tm:IC} we show that there exist many profiles $Q(x)$ 
that satisfy~\eqref{DQ-2},
\eqref{eq:Q0}, 
and the second boundary condition in~\eqref{eq:ll}. 
Let $Q(x)$ be such a profile.  
It remains to show that the first boundary condition in~\eqref{eq:ll} holds.
Since $Q(x)$ is monotone and bounded below by $0$, 
then there exists an asymptotic limit
as $x\to-\infty$. 
Since $\lim_{x\to\infty} Q(x)=\rho_+$, 
by part (i) of Lemma~\ref{lm:2} the period must be 
\[t_p=\frac{\ell}{\bar f}, \quad \mbox{where} \quad \bar f = f^+(\rho_+).\]
By part (ii) of Lemma~\ref{lm:2} the limit at $x\to-\infty$ must be 
$\rho_-$ which satisfies 
$f^-(\rho_-)=\bar f$.
Since $\rho_-$ must a stable asymptote, we have $\rho_- \le\rho^*$.

The non-intersecting property of the profiles 
follows from Lemma~\ref{lm3}. 
\end{proof}

Sample profiles of $Q(x)$ with various $Q(0)$ values 
  are illustrated in Figure~\ref{fig:1A} plot (2), 
using 
\[
V_-=2, \quad V_+=1, \quad \ell=0.2, \quad \phi(\rho)=1-\rho,  \quad \bar f = 3/16.
\]

As comparison, 
we also illustrate the stationary viscous profiles.
For this sub-case there exist infinitely many stationary monotone viscous profiles
that satisfy the ODE~\eqref{ODE-vis}.
For each value of $\rho^\varepsilon(0) \in (\rho_1^+,\rho_+]$, there exists a unique
viscous profile. 
Sample viscous profiles $\rho^\varepsilon(x)$
with 
$\varepsilon=0.2$ and 
with various $\rho^\varepsilon(0)$ values 
are given in Figure~\ref{fig:1A} plot (3).

\bigskip

\begin{figure}[htb]
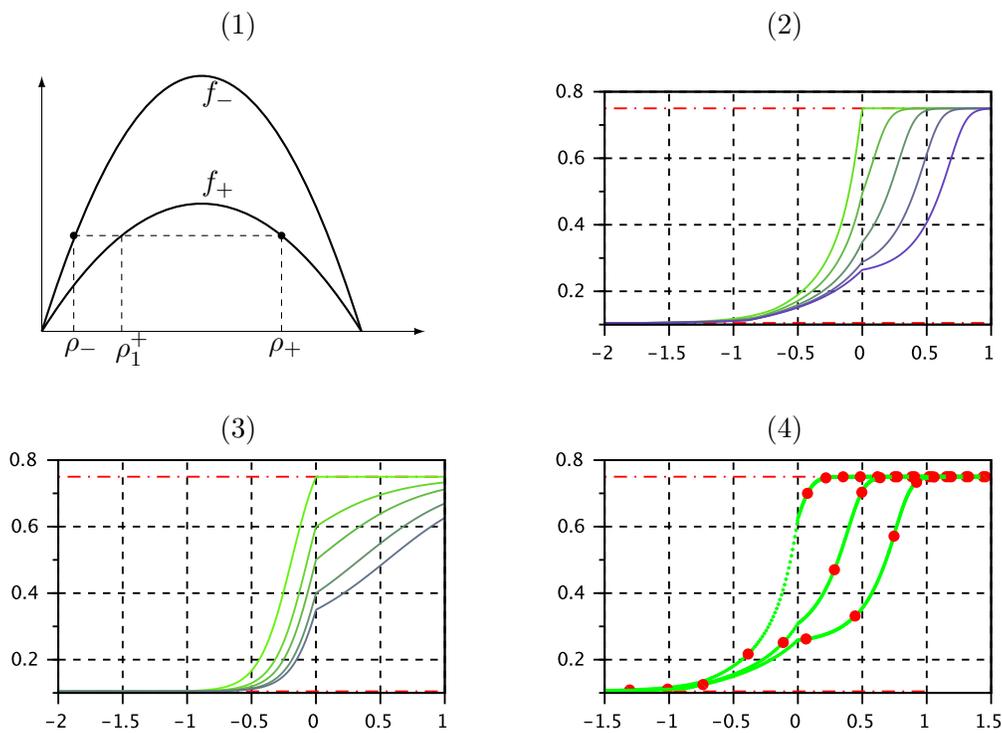

\begin{center}
\begin{tabular}{cc}
(1) & (2) \\
\setlength{\unitlength}{0.85mm}
\begin{picture}(83,50)(-10,-5)  
\put(0,0){\vector(1,0){60}}
\put(0,0){\vector(0,1){40}}
\multiput(5,15)(2,0){17}{\line(1,0){1}}
\multiput(37.5,15)(0,-2){8}{\line(0,-1){1}}\put(34,-3){ $\rho_+$}
\multiput(5,15)(0,-2){8}{\line(0,-1){1}}\put(2,-3){ $\rho_-$}
\multiput(12.5,15)(0,-2){8}{\line(0,-1){1}}\put(10,-4){ $\rho_1^+$}
\put(25,22){$f_+$}\put(25,36){$f_-$}
\put(5,15){\circle*{1.2}}\put(37.5,15){\circle*{1.2}}
\thicklines
\qbezier(0,0)(25,40)(50,0)
\qbezier(0,0)(25,80)(50,0)
\end{picture}
&
\includegraphics[width=6.5cm,clip,trim=0mm 0mm 5mm 5mm]{DDDE1A.pdf}\\[5mm]
(3) & (4) \\
\includegraphics[width=6.5cm,clip,trim=0mm 0mm 5mm 5mm]{vis1a.pdf} &
\includegraphics[width=6.5cm,clip,trim=0mm 0mm 5mm 5mm]{FtL-1a.pdf}
\end{tabular}
\caption{Case 1A:  (1) Plot of the flux functions $f_-,f_+$ 
and the locations of $\rho_-,\rho_+$; 
(2) Plots of various profiles of $Q(x)$, with different values of $Q(0)$; 
(3) Plots of various viscous traveling waves $\rho^\ve(x)$, with different values
of $\rho^\ve(0)$; 
(4) Plots of various solutions of the FtL model $\{z_i(t), \rho_i(t)\}$,  with 3 different initial Riemann data. 
Here the thick dots denote the locations of cars at $t=2$.} 
\label{fig:1A}
\end{center}
\end{figure}

\subsubsection{Local stability of the profiles.} 
We have shown that for each given $Q(0)\in(\rho_1^+, \rho_+]$, 
there exists a unique stationary profile $Q(x)$. 
Let $Q^\sharp(x)$ be the profile with $ Q^\sharp(0) =\rho_+$, 
and let $Q^\flat(x)$ be the limit profile as
$Q(0) \to \rho_1^+$. 
We define the domain
\bel{def:D}
D \;\dot=\; \left\{ (x,y)~ :~  Q^\flat(x) < y \le Q^\sharp(x),~  x\in\mathbb{R}\right\}.
\eeq
Clearly all profiles of $Q(x)$ lie in $D$. 
We now show that $D$ is a basin of attraction of the solution of the FtL,
in the sense described below.

Since all the profiles in $D$ never cross each other, 
we can parametrize the family of profiles, say by the value $Q(0)$. 
By continuity, 
any point $( x, y)\in D$ belong to a unique profile, call it $Q_{( x, y)}$ such that 
\[
Q_{( x, y)} (x) = y.
\]
For any point $(x,y)\in D$, we define the function
\bel{eq:defPsi}
  \Psi(x,y) \; \dot=\; Q_{(x,y)}(0),\qquad (x,y)\in D. 
\eeq

\begin{theorem}\label{tm:att}
Consider the setting of Corollary~\ref{cor:1} and let $D$ be defined as in~\eqref{def:D}.
Let $\{z_i(0)\}$ be a set of initial car positions and $\{\rho_i(0)\}$
be the corresponding discrete density defined as~\eqref{def-rho-i},
and assume that 
\bel{eq:assume}
\left(z_i(0), \rho_i(0)\right) \in D, \qquad \forall i\in\mathbb{Z}.
\eeq 
Let $\{z_i(t)\}$
be the solution of the FtL model with this initial data, 
and let $\{\rho_i(t)\}$ be the corresponding discrete density. 
Then 
\bel{eq:zt}
(z_i(t), \rho_i(t)) \in D, \qquad \forall t>0, ~\forall i\in\mathbb{Z}.
\eeq 
Denote
\[
 \Psi_i(t) = \Psi(z_i(t),\rho_i(t)), \qquad  i\in\mathbb{Z},
 \]
and define the total variation 
\[
\mbox{TV} \{\Psi_i(t)\}   \;\dot=\;  \sum_i \Big| \Psi_i(t)-\Psi_{i+1}(t)\Big|. 
\]
Then, we have
\bel{eq:tv} \lim_{t \to \infty} \mbox{TV} \{\Psi_i(t)\} =0,\qquad 
\mbox{i.e.,}\quad 
\lim_{t \to \infty} \Psi_i(t)=\tilde\Psi, \quad \forall i\in\mathbb{Z}.
\eeq
Thus, asymptotically the points $\{z_i(t),\rho_i(t)\}$  trace along 
 the profile $Q(x)$ with $Q(0) = \tilde\Psi$ as $t\to\infty$.
 \end{theorem}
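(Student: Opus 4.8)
The plan is to pass from the full FtL dynamics on $D$ to a scalar, order--preserving ``pursuit'' system for the labels $\Psi_i(t)$, and then to use the total variation of $\{\Psi_i\}$ as a Lyapunov functional. Fix a car $i$ and let $Q:=Q_{(z_i,\rho_i)}$ be the unique profile through $(z_i,\rho_i)$, so that $Q(z_i)=\rho_i$ and the leader sits at $z_{i+1}=z_i+\ell/\rho_i=z_i^\sharp$. Since by~\eqref{eq:defPsi} the function $\Psi$ is constant along each profile, differentiating $\Psi(x,Q(x))\equiv\Psi_i$ gives $\Psi_x=-Q'(z_i)\,\Psi_y$ at $(z_i,\rho_i)$, whence $\dot\Psi_i=\Psi_y\,[\dot\rho_i-Q'(z_i)\dot z_i]$. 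Substituting $\dot z_i,\dot\rho_i$ from~\eqref{dotzi}--\eqref{dot-rho-i} and the value of $Q'(z_i)$ from the DDDE~\eqref{DQ-2}, and noting that the profile value $Q_{(z_i,\rho_i)}(z_{i+1})$ at the leader position differs from the true density $\rho_{i+1}=Q_{(z_{i+1},\rho_{i+1})}(z_{i+1})$, the two terms carrying $k(z_i)\phi(\rho_i)$ cancel and I obtain
\[
\dot\Psi_i=\Psi_y\,\frac{\rho_i^2\,k(z_{i+1})}{\ell}\,\Big[\phi\big(Q_{(z_i,\rho_i)}(z_{i+1})\big)-\phi\big(Q_{(z_{i+1},\rho_{i+1})}(z_{i+1})\big)\Big].
\]

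The next step records the sign. The profiles are ordered by their label (Corollary~\ref{cor:1}, Lemma~\ref{lm3}), so $\Psi_y>0$ and the difference $Q_{(z_i,\rho_i)}(z_{i+1})-Q_{(z_{i+1},\rho_{i+1})}(z_{i+1})$ has the same sign as $\Psi_i-\Psi_{i+1}$; since $\phi'<0$, the bracket has the sign of $\Psi_{i+1}-\Psi_i$ and vanishes iff $\Psi_i=\Psi_{i+1}$. Writing the bracket through the mean value theorem as $\phi'(\xi)\,\partial_c Q_{\tilde c}(z_{i+1})\,(\Psi_i-\Psi_{i+1})$ puts the dynamics in cooperative form
\[
\dot\Psi_i=a_i(t)\,(\Psi_{i+1}-\Psi_i),\qquad a_i(t)>0,
\]
so each car's label is driven monotonically toward that of its leader. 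This already yields the invariance~\eqref{eq:zt}: while the configuration lies in $D$ the above system is valid, and its maximum principle ($\max_i\Psi_i$ non--increasing, $\min_i\Psi_i$ non--decreasing) keeps every $\Psi_i(t)$ in $[\inf_j\Psi_j(0),\sup_j\Psi_j(0)]\subset(\rho_1^+,\rho_+]$; a first--exit--time argument then shows the cars cannot reach the boundary of $D$ in finite time, so $D$ is invariant.

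With the cooperative structure in hand, the total variation is a Lyapunov functional. Setting $d_i=\Psi_{i+1}-\Psi_i$ one has $\dot d_i=a_{i+1}d_{i+1}-a_i d_i$, and after reindexing
\[
\frac{d}{dt}\sum_i|d_i|=\sum_j a_j\,d_j\,\big[\mathrm{sign}(d_{j-1})-\mathrm{sign}(d_j)\big]\le 0,
\]
since each summand obeys $d_j\,\mathrm{sign}(d_{j-1})\le|d_j|$. Hence $\mbox{TV}\{\Psi_i(t)\}$ is non--increasing, and the summand at an index $j$ is \emph{strictly} negative whenever the label sequence changes sign there, i.e.\ at a genuine local extremum.

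The hard part will be upgrading this monotonicity to the decay~\eqref{eq:tv}. My plan is to combine the maximum principle (which confines the labels to a shrinking interval and forces each $\Psi_i(t)$ to converge) with the strict decrease at sign changes, so that all local extrema of $\{\Psi_i\}$ are asymptotically extinguished and the limiting label sequence is monotone; the pinching of the profile family as $x\to\pm\infty$, where all profiles collapse onto $\rho_\pm$ and the coefficients $a_i(t)$ degenerate, must then be used to rule out a persistent monotone transition and to conclude that the common limit is a single constant $\tilde\Psi$. Controlling the tails $i\to\pm\infty$ against this degeneration of $a_i$ is the delicate point, and it is where the order--preserving structure has to be exploited quantitatively rather than just qualitatively; once $\Psi_i(t)\to\tilde\Psi$ for every $i$ is established, the points $\{z_i(t),\rho_i(t)\}$ asymptotically lie on the profile with $Q(0)=\tilde\Psi$, which is the final assertion.
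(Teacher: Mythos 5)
Your proposal follows essentially the same route as the paper's proof. The identity $\dot\Psi_i=\Psi_y\,[\dot\rho_i-Q'(z_i)\dot z_i]$, with the cancellation of the $k(z_i)\phi(\rho_i)$ terms, is precisely the paper's comparison of the FtL vector field~\eqref{eq:Q4} against the profile slope~\eqref{eq:Q3}, which is how the paper obtains its sign conditions (i)--(ii); and your maximum principle for the labels is the paper's barrier argument with $Q^\sharp,Q^\flat$ rewritten in the coordinate $\Psi$. Where you add value is in making the cooperative structure explicit, $\dot\Psi_i=a_i(t)(\Psi_{i+1}-\Psi_i)$ with $a_i>0$, and in observing honestly that this only yields that $\mbox{TV}\{\Psi_i(t)\}$ is non-increasing. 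The decay~\eqref{eq:tv} itself you leave as a plan, and you correctly identify the obstruction: the coefficients $a_i(t)$ degenerate in the tails where the profile family pinches onto $\rho_\pm$, so strict decrease at sign changes does not by itself force the variation to zero (nor is $\sum_i|\Psi_{i+1}(0)-\Psi_i(0)|<\infty$ guaranteed for arbitrary data in $D$). Be aware that the paper does not close this gap either: it simply asserts that (i) and (ii) ``suffice'' for~\eqref{eq:tv}. So your attempt reproduces everything the paper actually proves, and the step you flag as delicate is exactly the step the paper leaves implicit; a complete argument would require the quantitative tail estimate you sketch, or an added hypothesis of finite initial variation together with a lower bound on $a_i(t)$ in the transition region.
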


\begin{proof}
We first assume~\eqref{eq:zt} and prove~\eqref{eq:tv}.  
Fix a time $\tau\ge0$. 
It suffices to show the followings: 
\begin{itemize}
\item[(i)] If $\Psi_m(\tau) > \Psi_{m+1}(\tau)$ at time $\tau$ for some $m$, 
then 
$ \frac{d}{dt} \Psi_m(\tau) <0 $; and
\item[(ii)]
If $\Psi_n(\tau) < \Psi_{n+1}(\tau)$ at time $\tau$ for some $n$, 
then 
$ \frac{d}{dt} \Psi_n(\tau) >0 $.
\end{itemize}
We prove (i) while (ii) can be proved in an entirely similar way. 
Let $\hat Q(x)$ be the profile that passes through the point 
$\{z_m(\tau),\rho_m(\tau)\}$. 
By the assumption $\Psi_m(\tau) > \Psi_{m+1}(\tau)$, and the point
$\{z_{m+1}(\tau),\rho_{m+1}(\tau)\}$  lies below the profile $\hat Q(x)$,
i.e.,
\bel{eq:Q1}
 \rho_{m+1}(\tau) <\hat Q(z_{m+1}(\tau)) .
\eeq
It suffices to show that 
\bel{eq:Q2}
\frac{\dot \rho_m(\tau)}{\dot z_{m}(\tau)} <\hat Q'(z_{m}(\tau)),
\eeq
indicating that the point $(z_m(\tau),\rho_m(\tau))$  moves below the profile 
$\hat Q(x)$ as $t$ increases from $\tau$. 
Indeed, equation~\eqref{DQ} gives
\bel{eq:Q3}
\hat Q'(z_{m}) =\frac{\hat Q^2(z_m)}{\ell k(z_m) \phi(\hat Q(z_m))}
\left[ k(z_m)\phi(\hat Q(z_m)) -  k(z_{m+1})\phi(\hat Q(z_{m+1})) 
\right] .
\eeq
On the other hand, \eqref{dotzi} and~\eqref{dot-rho-i} give
\bel{eq:Q4}
\frac{\dot \rho_m(\tau)}{\dot z_{m}(\tau)}
 =\frac{\rho_m^2}{\ell k(z_m) \phi(\rho_m)}
\left[ k(z_m)\phi(\rho_m) -  k(z_{m+1})\phi(\rho_{m+1})
\right] .
\eeq
Since $\rho_m=\hat Q(z_m)$, together with~\eqref{eq:Q1}, 
we conclude~\eqref{eq:Q2}.

We now prove~\eqref{eq:zt}. We consider the upper bound $Q^\sharp$, 
while the lower bound is entirely similar. 
Given a time $\tau\ge0$, we assume that $(z_i(\tau),\rho_i(\tau))\in D$ 
for all $i$, such that 
\[\rho_i(\tau) = Q^\sharp(z_i(\tau)),\qquad \forall i. \]
It suffices to show that, if there exist an index $m$ such that, 
\[
\rho_m(\tau) = Q^\sharp(z_m(\tau)), \quad 
\rho_{m+1}(\tau) \le Q^\sharp(z_{m+1}(\tau)), 
\]
then
\begin{equation}\label{eq:Q2b}
\frac{\dot\rho_m(\tau)}{ \dot z_m(\tau)} 
\le (Q^\sharp)'(z_m(\tau)), 
\end{equation}
The proof for~\eqref{eq:Q2b} is entirely similar to that of~\eqref{eq:Q2}, 
replacing $Q^\sharp$ with $\hat Q$. 
%
%
%
\end{proof}

Numerical approximations are computed  for the solutions of the FtL model 
with the following ``Riemann initial data'', 
\bel{eq:rmzr}
z_i(0) = \begin{cases}
i \ell/\rho_+,  \quad & i\ge x_0, \\
i \ell/\rho_-,  \quad & i< x_0 ,
\end{cases} \qquad\qquad
\rho_i(0) = \begin{cases}
\rho_+,  \quad & i\ge x_0, \\
\rho_-,  \quad & i< x_0 .
\end{cases}
\eeq
The simulations are carried out for $0\le t \le 2$.
In Figure~\ref{fig:1A} plot (4), 
we plot the trajectory of $z_i(t)$ (in green) for 
the last period 
\[2-\frac{\ell}{\bar f}  \le t\le 2,\]
together with the car positions at $t=2$ as thick dots (in red).
The 3 profiles in the plot are for 
\[x_0=0, \quad x_0=0.3 \ell/\rho_-, \quad \mbox{and}\quad x_0=0.6 \ell/\rho_-.\]
Even though the initial data points $\{z_i(0),\rho_i(0)\}$ 
are not entirely in $D$, nevertheless we observe that the solutions of FtL model
converge quickly to certain profiles of $Q(x)$,
suggesting that Theorem~\ref{tm:att} probably applies to a larger domain.

All numerical simulations in this paper are carried out using SciLab. 
The source codes are available from the author's web-site~\cite{ShenWeb}.

\subsection{Case 1B: $0<\rho_- < \rho_+ \le \rho^*$.}

Since $\rho^+\le \rho^*$ is an unstable asymptote for $x\to+\infty$,
the only solution on $x\ge 0$ is the constant solution $Q(x) \equiv \rho_+$.
Once $Q(x)$ is given on $x>0$, 
the rest can be solved backward in $x$ using~\eqref{DQ-2},
as an initial value problem. 
The existence and uniqueness of the profile follows from 
the same arguments as those
for Theorem~\ref{tm:IC} and Corollary~\ref{cor:1}. 
We summarize the result in next Theorem.

\begin{theorem}\label{tm:1B}
Let $V_->V_+$ and $0<\rho_- < \rho_+ \le \rho^*$ with $f_-(\rho_-)=f_+(\rho_+)$.
There exists a unique monotone profile $Q(x)$ which satisfies the equation~\eqref{DQ-2}
with
\[
 Q(x)=\rho_+ \quad \mbox{for}\quad x\ge 0, \qquad 
\lim_{x\to-\infty}Q(x) = \rho_-. 
\]
\end{theorem}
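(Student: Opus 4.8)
The plan is to build $Q$ separately on the two half-lines $x\ge 0$ and $x<0$ and then glue them continuously at $x=0$. On $x\ge 0$ we have $k\equiv V_+$ and $x^\sharp=x+\ell/Q(x)>0$, so the first branch of~\eqref{DQ-2} is in force and the equation there is exactly~\eqref{DW}, whose asymptotic theory (with flux $f_+$) was developed in~\cite{ShenKarim2017}. First I would use the stability dichotomy recalled from Lemma~2.5 of~\cite{ShenKarim2017}: since $\rho_+\le\rho^*$, the state $\rho_+$ is an \emph{unstable} asymptote as $x\to+\infty$, so no nonconstant solution of~\eqref{DW} can converge to $\rho_+$. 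The constant $Q\equiv\rho_+$ does solve~\eqref{DW} trivially, so it is the \emph{only} admissible branch on $x\ge 0$ with $\lim_{x\to+\infty}Q(x)=\rho_+$. This is exactly the step that upgrades the ``infinitely many profiles'' of Corollary~\ref{cor:1} to uniqueness here: the initial datum on $x\ge 0$ is now forced rather than a free parameter.

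With $Q\equiv\rho_+$ prescribed on $x\ge 0$, I would then solve~\eqref{DQ-2} backward in $x$ as an initial value problem, re-running the method-of-steps argument of Theorem~\ref{tm:IC}. The key observation is that that argument never used the position of $\rho_+$ relative to $\rho^*$; it used only $Q(0)=\rho_+<1$ (hence $\phi(\rho_+)>0$) and the ordering $V_->V_+$. Concretely, the transversality conditions~\eqref{eq:cc1}--\eqref{eq:cc} along $\mathcal{C}_0=\{x=0\}$ and the graph $\mathcal{C}_1$ of $h(x)=-\ell/x$ hold verbatim because $\phi(\rho_+)>0$; the lower bound $Q>0$ persists because $0$ is a critical point of the right-hand side; and the monotonicity argument on each strip $I_k$ is unchanged, since at a hypothetical point $y$ with $Q'(y)=0$ the second branch of~\eqref{DQ-2} forces $V_-\phi(Q(y))=V_+\phi(Q(y^\sharp))$, whence $\phi(Q(y))<\phi(Q(y^\sharp))$ and $Q(y)>Q(y^\sharp)$, contradicting the non-decrease of $Q$ to the right of $y$. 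A direct computation gives $Q'(0-)=\rho_+^2(V_--V_+)/(\ell V_-)>0$, so the profile genuinely decreases as $x$ decreases from $0$. This yields a unique, strictly monotone solution $Q$ on $x<0$ with $0<Q(x)<\rho_+$.

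It then remains to identify the limit at $-\infty$, which I would handle as in the proof of Corollary~\ref{cor:1}. Being monotone and bounded below by $0$, $Q$ has a limit $\rho_\infty\ge 0$ as $x\to-\infty$. Since $\lim_{x\to+\infty}Q=\rho_+$ with $\bar f=f_+(\rho_+)$, part~(i) of Lemma~\ref{lm:2} fixes the period $t_p=\ell/\bar f$, and then part~(ii) forces $f_-(\rho_\infty)=\bar f$. Of the two roots of $f_-(\cdot)=\bar f$, only the one below $\rho^*$ is a stable asymptote as $x\to-\infty$, so $\rho_\infty=\rho_-=\rho_1^-$, consistent with the monotone decrease $Q(x)<\rho_+\le\rho^*$. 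If preferred, Lemma~\ref{lm5}(a) gives the same conclusion directly: near $x=0^-$ one has $\rho_-<Q(x)<\rho^*$, hence $f_-(Q(x))>\bar f$, and the invariant-region statement then propagates $Q>\rho_-$ and yields $\lim_{x\to-\infty}Q(x)=\rho_-$.

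The hard part, such as it is, is bookkeeping rather than a new estimate: Theorem~\ref{tm:IC} is stated only for $\rho^*<\rho_+<1$, so it cannot be cited verbatim, and one must verify that each of its sub-steps survives the replacement $\rho_+\le\rho^*$. As indicated above, every inequality in those sub-steps depends only on $\phi(\rho_+)>0$ and $V_->V_+$, both of which still hold, so the extension is routine. The genuinely new content of Case~1B, distinguishing it from Case~1A, is simply the forced uniqueness of the constant branch on $x\ge 0$ coming from the instability of the asymptote $\rho_+$.
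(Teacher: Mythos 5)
Your proposal is correct and follows essentially the same route as the paper, which likewise forces $Q\equiv\rho_+$ on $x\ge 0$ from the instability of the asymptote $\rho_+\le\rho^*$ and then settles the backward initial value problem and the limit at $-\infty$ by ``the same arguments as those for Theorem~\ref{tm:IC} and Corollary~\ref{cor:1}.'' Your explicit verification that those arguments rely only on $\phi(\rho_+)>0$ and $V_->V_+$ (never on $\rho_+>\rho^*$) supplies detail the paper leaves implicit.
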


A typical plot of $Q(x)$ is given in Figure~\ref{fig:1B} plot (2).
%
As comparison, we also plot the viscous profile $\rho^\varepsilon(x)$
in Figure~\ref{fig:1B} plot (3),
with $\rho^\varepsilon(x)=\rho_+$ on $x\ge 0$. 
This is the only viscous profile that connects the two limit values $\rho_\pm$
at $x\to\pm\infty$. 

\textbf{Instability.} 
Since $\rho_+$ is an unstable asymptote, the profile is  unstable
with respect to perturbations on $x>0$, and the solution of the FtL model
can not converge to the profile in the sense of Theorem~\ref{tm:att}. 
Even if one starts with ``Riemann'' initial data with 
$\rho_i(0)=\rho_+$ for all  $z_i(0)\ge 0$, the perturbation, initially on $x<0$,
will propagate into the region $x>0$.  
Numerical simulation verifies this fact, see Figure~\ref{fig:1B} plot (4),
where a perturbation is formed and moves into $x>0$. 
Although on $x<0$ the FtL solution gets very close to the 
profile $Q$, the stability can not be achieved on $x>0$. 
This forward propagating wave  is caused by the fact that the characteristic speed
satisfies
\[
f'_-(\rho_-)>0,\qquad f'_+(\rho_-)>0,
\]
therefore information travels to the right.

\begin{figure}[htbp]
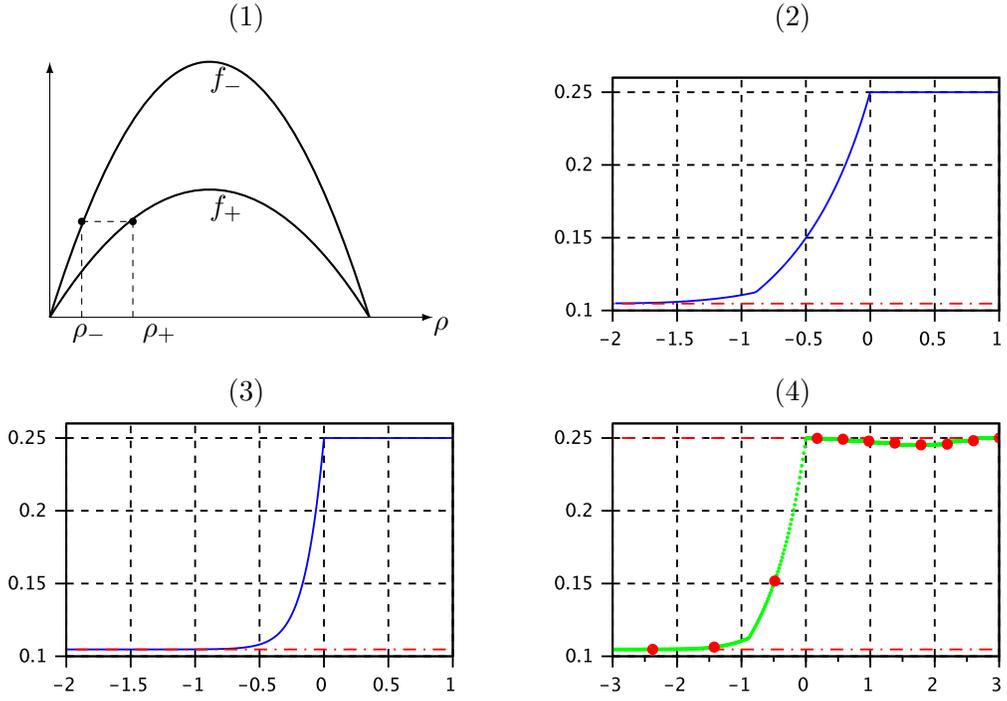

\begin{center}
\begin{tabular}{cc}
(1) & (2) \\
\setlength{\unitlength}{0.85mm}
\begin{picture}(83,49)(-10,-5)  
\put(0,0){\vector(1,0){60}}\put(60,-2){$\rho$}
\put(0,0){\vector(0,1){40}}
\multiput(5,15)(2,0){4}{\line(1,0){1}}
\multiput(5,15)(0,-2){8}{\line(0,-1){1}}\put(2,-3){ $\rho_-$}
\multiput(13,15)(0,-2){8}{\line(0,-1){1}}\put(13,-3){ $\rho_+$}
\put(25,16){$f_+$}\put(25,36){$f_-$}
\put(5,15){\circle*{1.2}}\put(13,15){\circle*{1.2}}
\thicklines
\qbezier(0,0)(25,40)(50,0)
\qbezier(0,0)(25,80)(50,0)
\end{picture}
&
\includegraphics[width=6.5cm,clip,trim=0mm 0mm 5mm 5mm]{DDDE1B.pdf}\\[2mm]
(3) & (4) \\
\includegraphics[width=6.5cm,clip,trim=0mm 0mm 5mm 5mm]{vis1b.pdf}&
\includegraphics[width=6.5cm,clip,trim=0mm 0mm 5mm 5mm]{FtL-1b.pdf}
\end{tabular}
\caption{Case 1B.  (1) Plots of the flux functions and the locations of $\rho_-,\rho_+$; 
(2) Plot of the unique stationary profile $Q(x)$ with $Q(0)=\rho_+$;
(3) Plot of the unique viscous profile $\rho^\ve(x)$ with $\rho^\ve(0)=\rho_+$; (4) Plot of the solution of the FtL model $\{z_i(t), \rho_i(t)\}$ with  a 
Riemann initial data. Here the thick dots are the locations
of cars at $t=2$.}
\label{fig:1B}
\end{center}
\end{figure}

\subsection{Case 1C: $\rho^* < \rho_+ < \rho_- <1$.}

Since  $\rho_->\rho^*$ is an unstable asymptote as $x\to-\infty$,
one must have 
\[
Q(x) \equiv \rho_- \qquad \mbox{for} \quad x < 0.
\]
Now consider the value $Q(0+)$. 
Since  $Q'(-\ell/\rho_-)=0$, equation~\eqref{DQ-2}  implies 
\[
V_- \phi(Q(-\ell/\rho_-)) = V_+ \phi(Q(0+)) \qquad \rightarrow \qquad 
Q(0+) < Q(-\ell/\rho_-)=Q(0-).
\]
This implies that $Q(x)$ is discontinuous at $x=0$, which is not possible 
for the solution of~\eqref{DQ-2}. 
We have the following Theorem.

\begin{theorem}\label{tm:1C}
Let $V_->V_+$ and $\rho^*< \rho_+<\rho_-<1$ 
with $f_-(\rho_-)=f_+(\rho_+)$.
There exists no profile $Q(x)$ that satisfies~\eqref{DQ-2} 
and the boundary conditions~\eqref{eq:ll}.
\end{theorem}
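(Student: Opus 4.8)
The plan is to argue by contradiction, combining the instability of the asymptote $\rho_-$ at $x\to-\infty$ (recalled from Lemma~2.5 of~\cite{ShenKarim2017}) with the continuity requirement for any solution of the DDDE~\eqref{DQ-2}. First I would suppose a profile $Q(x)$ exists satisfying~\eqref{DQ-2} together with $\lim_{x\to-\infty}Q(x)=\rho_-$ and $\lim_{x\to+\infty}Q(x)=\rho_+$, where $\rho^*<\rho_+<\rho_-<1$. Since $\rho_->\rho^*$, the left asymptote is unstable, so $Q(x)$ cannot approach $\rho_-$ at an exponential rate as $x\to-\infty$; the only way to realize this boundary condition is the constant solution
\[
Q(x)\equiv\rho_-\qquad\text{for } x<0.
\]
This is the crux of the first reduction, and it rests entirely on the cited stability dichotomy.

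Next I would examine the behavior as $x$ crosses the jump at $x=0$. For $x$ slightly negative, namely $x=-\ell/\rho_-$, the leader sits exactly at the jump, $x^\sharp = x+\ell/Q(x)=0$. Because $Q\equiv\rho_-$ on $x<0$, we have $Q'(-\ell/\rho_-)=0$, and the constancy means the point $x=-\ell/\rho_-$ lies in the regime $x<0<x^\sharp$ of the second case in~\eqref{DQ-2}. Setting $Q'=0$ there forces the bracket to vanish:
\[
V_-\,\phi\bigl(Q(-\ell/\rho_-)\bigr) = V_+\,\phi\bigl(Q(0+)\bigr),
\qquad\text{i.e.}\qquad
V_-\,\phi(\rho_-) = V_+\,\phi(Q(0+)).
\]
Since $V_->V_+>0$ and $\phi(\rho_-)>0$, this gives $\phi(Q(0+)) = (V_-/V_+)\,\phi(\rho_-) > \phi(\rho_-)$, and because $\phi$ is strictly decreasing by~\eqref{phi-prop}, we conclude
\[
Q(0+) < \rho_- = Q(0-).
\]
Thus $Q$ must jump downward at $x=0$.

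Finally I would close the contradiction: solutions of~\eqref{DQ-2}, being obtained by the method of steps under the transversality condition established in Theorem~\ref{tm:IC}, are required to remain continuous, with kinks but no jumps. The strict inequality $Q(0+)<Q(0-)$ violates this continuity, so no such profile can exist, proving the theorem. The main obstacle, and the step requiring the most care, is the first reduction to $Q\equiv\rho_-$ on $x<0$: one must be sure that the instability of the $\rho_-$ asymptote genuinely rules out any non-constant monotone approach to $\rho_-$, rather than merely excluding exponential approach. I would address this by invoking the monotonicity-in-average statement of Lemma~\ref{lm4} together with the invariant-region analysis underlying Lemma~\ref{lm5}, adapted to the present regime, to confirm that no non-constant profile can settle onto the unstable state $\rho_-$ from the left; once that is secured, the jump computation above is elementary.
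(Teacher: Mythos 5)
Your proposal is correct and follows essentially the same route as the paper: reduce to $Q\equiv\rho_-$ on $x<0$ via the instability of the left asymptote $\rho_->\rho^*$, then evaluate~\eqref{DQ-2} at $x=-\ell/\rho_-$ (whose leader sits at $x=0$) to force $V_-\phi(\rho_-)=V_+\phi(Q(0+))$ and hence a downward jump $Q(0+)<Q(0-)$, contradicting continuity. Your closing remark about shoring up the first reduction with Lemma~\ref{lm4} and the invariant-region argument of Lemma~\ref{lm5} is a reasonable extra precaution, but the paper simply asserts that step from the cited stability dichotomy.
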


We remark that for this sub-case there exists a unique viscous profile for this case, 
see Figure~\ref{fig:1C} plot (2). 
We also plot the solution of the FtL model with this ``Riemann data'',
see Figure~\ref{fig:1C} plot (3). 
Observe that the solution is highly oscillatory on $x<0$, 
and it never settles, indicating no convergence as $t$ grows.

\begin{figure}[htbp]
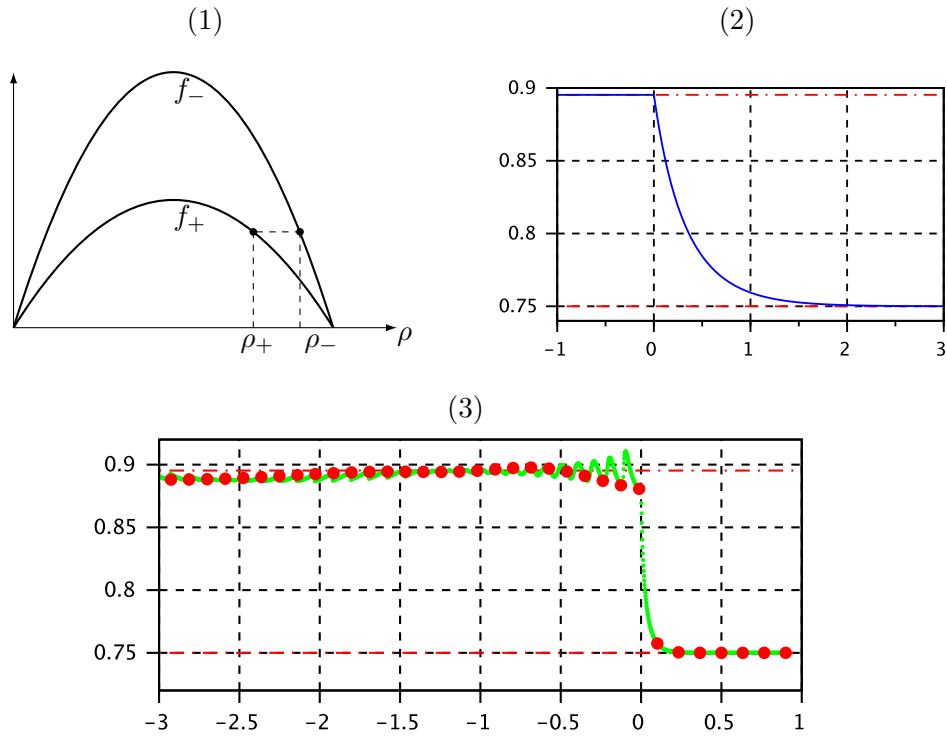

\setlength{\unitlength}{0.85mm}
\begin{center}
\begin{tabular}{cc}
(1) & (2) \\
\begin{picture}(80,50)(-10,-5)  
\put(0,0){\vector(1,0){60}}\put(60,-2){$\rho$}
\put(0,0){\vector(0,1){40}}
\multiput(38,15)(2,0){4}{\line(1,0){1}}
\multiput(44.8,15)(0,-2){8}{\line(0,-1){1}}\put(44,-3){ $\rho_-$}
\multiput(37.5,15)(0,-2){8}{\line(0,-1){1}}\put(34,-3){ $\rho_+$}
\put(25,16){$f_+$}\put(25,36){$f_-$}
\put(44.8,15){\circle*{1.2}}\put(37.5,15){\circle*{1.2}}
\thicklines
\qbezier(0,0)(25,40)(50,0)
\qbezier(0,0)(25,80)(50,0)
\end{picture} &
\includegraphics[width=6.5cm,clip,trim=0mm 0mm 5mm 5mm]{vis1c.pdf}
\end{tabular} \\[3mm]
(3) \\
\includegraphics[width=11cm,clip,trim=0mm 0mm 5mm 5mm]{FtL-1c.pdf}
\caption{Case 1C.  
(1) Plots of the flux functions and the locations of $\rho_-,\rho_+$; 
(2) Plot of the unique viscous profile $\rho^\ve(x)$ with $\rho^\ve(0)=\rho_-$; (3) Plot of the solution of the FtL model $\{z_i(t), \rho_i(t)\}$ with  a 
Riemann initial data. Here the thick dots are the locations
of cars at $t=2$. }
\label{fig:1C}
\end{center}
\end{figure}

\bigskip

\subsection{Case 1D:  $0< \rho_+ \le  \rho^* < \rho_-<1$.}

Since both $\rho_->\rho^*$ and $\rho_+ \le\rho^*$ are unstable asymptotes, 
one must have $Q(x)=\rho_-$ on $x<0$ and $Q(x)=\rho_+$ on $x>0$,
which is not possible. 

\begin{theorem}\label{tm:1D}
Let $V_->V_+$ and $0<\rho^+< \rho^*<\rho_-<1$ with $f_-(\rho_-)=f_+(\rho_+)$.
There exists no profile $Q(x)$ that satisfies~\eqref{DQ-2} 
and the boundary conditions~\eqref{eq:ll}.
\end{theorem}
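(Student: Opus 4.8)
The plan is to show that the two unstable asymptotes over-determine the profile near the jump, forcing incompatible one-sided limits $Q(0-)$ and $Q(0+)$, so that no continuous profile can exist.

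First I would fix the profile on the two outer regions. On $x\ge 0$ the equation~\eqref{DQ-2} reduces to the constant-speed equation~\eqref{DW} with $V=V_+$; since $\rho_+\le\rho^*$ is an unstable asymptote as $x\to+\infty$ (Lemma~2.5 of~\cite{ShenKarim2017}), the argument used for Case~1B (Theorem~\ref{tm:1B}) forces $Q(x)\equiv\rho_+$ on $x\ge 0$, so in particular $Q(x^\sharp)=\rho_+$ whenever $x^\sharp\ge 0$. Symmetrically, on the autonomous region $x\le -\ell/\rho_-$, where both a car and its leader lie in $x<0$ and hence $k\equiv V_-$, the instability of $\rho_->\rho^*$ as $x\to-\infty$ forces, as in Case~1C (Theorem~\ref{tm:1C}), the value $Q(x)\equiv\rho_-$; thus $Q(-\ell/\rho_-)=\rho_-$ and the leader of that car sits exactly at $x=0$.

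The heart of the argument is the transition interval $(-\ell/\rho_-,0)$. For $x$ there the leader satisfies $x^\sharp\ge 0$, so $Q(x^\sharp)=\rho_+$ and the second line of~\eqref{DQ-2} collapses to the autonomous scalar ODE
\[
Q'(x)=\frac{Q(x)^2}{\ell\,V_-\,\phi(Q(x))}\Big[V_-\phi(Q(x))-V_+\phi(\rho_+)\Big].
\]
Using $V_-\phi(\rho_-)=\bar f/\rho_-$, $V_+\phi(\rho_+)=\bar f/\rho_+$, together with $\rho_+<\rho_-$ and $V_+<V_-$, one checks that this ODE has a single equilibrium $\bar q$, determined by $\phi(\bar q)=(V_+/V_-)\phi(\rho_+)$, which is strictly attracting and satisfies $\rho_+<\bar q<\rho_-$. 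The profile enters the interval at its left endpoint with $Q(-\ell/\rho_-)=\rho_->\bar q$, so the bracket is negative and $Q$ is strictly decreasing yet remains strictly above $\bar q$ on the whole finite interval; hence $Q(0-)>\bar q>\rho_+=Q(0+)$. This violates continuity of $Q$ at $x=0$, and therefore no profile exists. Along the way I would note that $\tfrac{d}{dx}(x+\ell/Q)=1-\ell Q'/Q^2>1$ on this interval (because $Q'<0$), so $x^\sharp$ increases from $0$ and stays positive, which justifies both the location of the transition region and the reduction $Q(x^\sharp)=\rho_+$.

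The step I expect to be the main obstacle is not the elementary ODE analysis but the rigorous justification that the two instabilities really force the constant values on the outer regions, i.e.\ that the only orbit of the autonomous delay equation~\eqref{DW} converging to an unstable rest state is the constant one; I would import this from Lemma~2.5 of~\cite{ShenKarim2017} and the Case~1B/1C reasoning. It is worth emphasizing that, unlike Case~1C, here a direct jump at $x=0$ is not immediate: because $\rho_+\le\rho^*$ makes the right state constant, the transition equation becomes autonomous and the incompatibility must be located through its interior equilibrium $\bar q$ rather than through a one-step evaluation of the delay equation.
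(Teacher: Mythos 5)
Your argument is correct and follows the same skeleton as the paper's proof: the instability of both asymptotes ($\rho_+\le\rho^*$ at $+\infty$ and $\rho_-\ge\rho^*$ at $-\infty$, via Lemma~2.5 of~\cite{ShenKarim2017}) pins $Q$ down to the constants $\rho_+$ and $\rho_-$ on the outer regions, and the contradiction is located at $x=0$. The paper disposes of the final step with the single clause ``which is not possible,'' implicitly taking $Q\equiv\rho_-$ on all of $x<0$ and reading off the discontinuity $\rho_-\ne\rho_+$ at the origin. Your version is more careful exactly where care is needed: the instability argument only forces $Q\equiv\rho_-$ where both a car and its leader lie in $x<0$, so on the transition interval $(-\ell/\rho_-,0)$ the profile obeys the genuinely autonomous scalar ODE whose interior equilibrium $\bar q$, defined by $V_-\phi(\bar q)=V_+\phi(\rho_+)$ and satisfying $\rho_+<\bar q<\rho_-$ (your verification of this is correct), yields $Q(0-)>\bar q>\rho_+=Q(0+)$ and hence the discontinuity. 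This buys a rigorous justification of the paper's one-line dismissal. The only point worth tightening is the mild circularity in declaring $x\le-\ell/\rho_-$ to be the autonomous region before $Q$ is known there: better to let $x_1$ be the rightmost point whose leader sits at $0$ (the leader map is order-preserving in the sense of Lemma~\ref{lm4.5}, so $\{x:\,x^\sharp\le 0\}$ is a half-line), force $Q\equiv\rho_-$ on $x\le x_1$ by instability, and identify $x_1=-\ell/\rho_-$ a posteriori; after that your continuation argument on the transition interval goes through as written.
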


For this sub-case there are no monotone viscous profiles either. 
In Figure~\ref{fig:1D} we plot numerical simulation result for the FtL model,
with ``Riemann initial data''. 
We observe oscillatory behavior on $x<0$, and a rarefaction wave
behavior on $x>0$.  The solution does not settle into any profile as $t$ grows.

\begin{figure}[htb]
\begin{center}
\setlength{\unitlength}{0.85mm}
\begin{picture}(57,50)(0,-3)  
\put(0,0){\vector(1,0){55}}
\put(0,0){\vector(0,1){40}}
\multiput(0,15)(2,0){26}{\line(1,0){1}}
\multiput(44.7,15)(0,-2){8}{\line(0,-1){1}}\put(44,-3){ $\rho_-$}
\multiput(12.4,15)(0,-2){8}{\line(0,-1){1}}\put(8,-3){ $\rho_+$}
\put(23,22){$f_+$}\put(23,36){$f_-$}
\put(44.8,15){\circle*{1.2}}\put(12.4,15){\circle*{1.2}}
\thicklines
\qbezier(0,0)(25,40)(50,0)
\qbezier(0,0)(25,80)(50,0)
\end{picture}
\includegraphics[width=9cm,clip,trim=5mm 0mm 21mm 6mm]{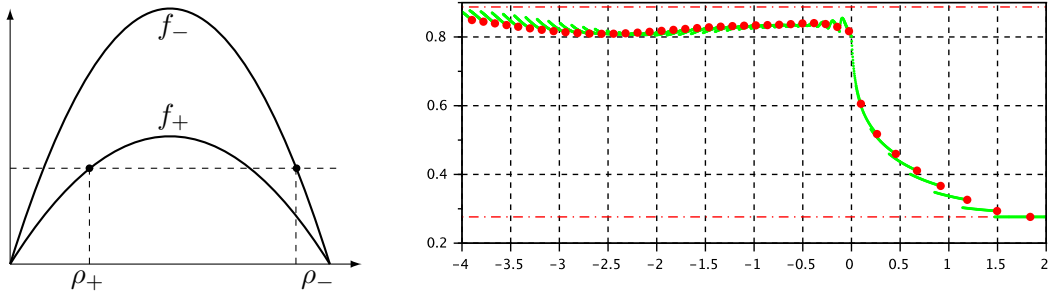}
\caption{Case 1D.  Left:  Plots of the flux functions and the locations of $\rho_-,\rho_+$; 
Right: Plot of the solution of the FtL model $\{z_i(t), \rho_i(t)\}$ with  a 
Riemann initial data. Here the thick dots are the locations
of cars at $t=2$. }
\label{fig:1D}
\end{center}
\end{figure}

\section{Case 2:  $V_- < V_+$.}\setcounter{equation}{0}

In this section we study the case where the speed limit has an upward jump at $x=0$.
The discussion for this case follows 
a similar path as for Case 1, but with rather different details. 
Given $\bar f$, which is in the range of both $f_\pm$, 
the candidates for $\rho_\pm$ are illustrated in Figure~\ref{fig:rhos2},
with
\[
 0<\rho^+_1 <\rho^-_1 \le \rho^* \le \rho_2^-  < \rho_2^+.
 \]

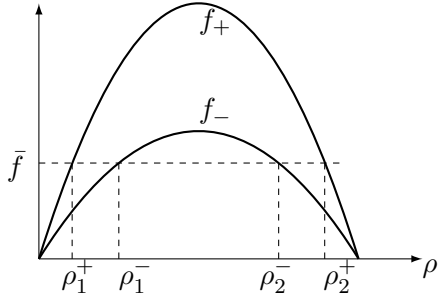
\begin{figure}[htbp]
\begin{center}
\setlength{\unitlength}{0.85mm}
\begin{picture}(70,50)(0,-5)  
\put(0,0){\vector(1,0){60}}\put(60,-2){$\rho$}
\put(0,0){\vector(0,1){40}}
\multiput(0,15)(2,0){24}{\line(1,0){1}}\put(-5,13){$\bar f$}
\multiput(5.2,15)(0,-2){8}{\line(0,-1){1}}\put(2,-4){ $\rho^+_1$}
\multiput(12.5,15)(0,-2){8}{\line(0,-1){1}}\put(11,-4){ $\rho_1^-$}
\multiput(37.5,15)(0,-2){8}{\line(0,-1){1}}\put(33,-4){ $\rho^-_2$}
\multiput(44.7,15)(0,-2){8}{\line(0,-1){1}}\put(43,-4){ $\rho^+_2$}
\put(25,22){$f_-$}\put(25,36){$f_+$}
\thicklines
\qbezier(0,0)(25,40)(50,0)
\qbezier(0,0)(25,80)(50,0)
\end{picture}
\caption{Flux functions $f_-, f_+$, and the locations of 
$\rho^-_1,\rho^+_1,\rho^-_2,\rho^+_2$.}
\label{fig:rhos2}
\end{center}
\end{figure}

We have the following 4  sub-cases:
\begin{itemize}
\item Case 2A:  
$\rho_-=\rho_1^-$ and $\rho_+=\rho^+_2$, 
such that $0<\rho_- < \rho^* < \rho_+<1$; 
\item Case 2B:  
$\rho_-=\rho_1^-$ and $\rho_+=\rho^+_1$, 
such that $ 0<\rho_+<\rho_-<\rho^*$;
\item Case 2C: 
$\rho_-=\rho_2^-$ and $\rho_+=\rho^+_2$, 
such that $ \rho^* \le \rho_-<\rho_+<1$;
\item Case 2D: 
$\rho_-=\rho_2^-$ and $\rho_+=\rho^+_1$, such that 
$ 0<\rho_+ < \rho^* \le \rho_-<1$.
\end{itemize}

\subsection{Case 2A:  $0 <\rho_- \le\rho^*< \rho_+<1$.}

Here both $\rho_-< \rho^* $ and $\rho_+>\rho^*$ are stable asymptotic limits 
as $x \to -\infty$ and $x\to +\infty$, respectively. 
Then, on $x>0$, the profile $Q(x)$  must be some horizontal shift of $W(x)$.
Using some horizontal shift of $W(x)$ 
as ``initial condition'', one can solve~\eqref{DQ} backward in $x$ on $x<0$. 
In next Theorem we establish unique solution of the initial value 
problem for~\eqref{DQ-2}, which in turn 
gives us the infinitely many profiles $Q(x)$ that satisfy the proper 
boundary conditions at the limit $x\to\pm\infty$. 

\begin{theorem}
Let $V_-<V_+$. Given $\rho_+$ such that  $\rho^*<\rho_+<1$.
Consider the initial value problem of ~\eqref{DQ-2}
with initial data given on $x\ge 0$ as some horizontal shift of 
$W(x)$,  with
\bel{Qc} 
\lim_{x\to\infty}W(x)=\rho_+, \qquad \rho_1^+ \le  W(0) \le \rho_2^-.
\eeq
Then the initial value problem has a unique solution $Q(x)$ on $x<0$.

Furthermore, such a solution satisfies also
\bel{LimL}
\lim_{x\to-\infty} Q(x) = \rho_-, \quad  \mbox{where} 
\quad \rho_- < \rho^*, \quad
f_-(\rho_-)=f_+(\rho_+).
\eeq
Piecing together $Q(x)$ on $x<0$ and $Q(x)=W(x)$ on $x\ge 0$, 
we obtain 
a solution to~\eqref{DQ-2}
with boundary conditions
\bel{bc2}
\lim_{x\to\infty}Q(x) = \rho_+,\qquad \lim_{x\to-\infty}Q(x) = \rho_-.
\eeq

Varying the $Q(0)$ value, always satisfying $\rho_1^+ \le Q(0) \le \rho_2^-$, 
one obtains infinitely many stationary wave profiles with the 
boundary conditions~\eqref{bc2}.
\end{theorem}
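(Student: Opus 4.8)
The plan is to prove the theorem in three stages, mirroring the structure already established for Case 1A (Theorem~\ref{tm:IC} and Corollary~\ref{cor:1}), but paying attention to the sign reversal $V_-<V_+$, which is the source of the new difficulties. First I would establish the \emph{well-posedness} of the backward initial value problem on $x<0$ via the transversality condition of~\cite{MR964856}, exactly as in Step~1 of Theorem~\ref{tm:IC}. The discontinuities of~\eqref{DQ-2} occur along the vertical line $\mathcal{C}_0=\{x=0\}$ and along the curve $\mathcal{C}_1=\{(x,-\ell/x):x<-\ell\}$. Along $\mathcal{C}_0$ the check is that $Q'(0\pm)$ stays bounded, which holds since $Q(0)\le\rho_2^-<1$ forces $\phi(Q(0))>0$. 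Along $\mathcal{C}_1$, at an intersection point $y<0$ with $Q(y)=h(y)=-\ell/y$ and $h'(y)=h(y)^2/\ell$, I would compute the one-sided derivatives
\[
Q'(y-)=h'(y)\left[1-\frac{\phi(Q(0))}{\phi(h(y))}\right],\qquad
Q'(y+)=h'(y)\left[1-\frac{V_+\phi(Q(0))}{V_-\phi(h(y))}\right],
\]
and verify $Q'(y\pm)<h'(y)$ so that the vector field crosses $\mathcal{C}_1$ transversally; since $\phi>0$ and the bracketed factors are strictly less than $1$, this again holds. With transversality in hand, the method of steps on the intervals $I_k=[-k\ell,-(k-1)\ell]$ yields a unique solution, provided one maintains $0<Q(x)<1$; the lower bound $Q>0$ follows from $0$ being a critical point just as before.

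The genuinely new difficulty, which I expect to be the main obstacle, is the \textbf{loss of monotonicity}. In Case 1A the relation $V_->V_+$ combined with $Q'(y)=0$ forced $Q(y)>Q(y^\sharp)$, contradicting a putative interior minimum of $Q$ and thereby proving monotonicity outright. With $V_-<V_+$ that argument reverses and fails, so I cannot claim $Q$ is monotone on $x<0$. Instead I would lean on the ``averaged monotonicity'' encoded in Lemma~\ref{lm4} and the invariant-region machinery of Lemma~\ref{lm5}. Concretely, the upper bound $Q(x)<1$ and the asymptotic behaviour must be controlled not pointwise but through the quantity $M_k=\max_{I_k}1/\phi(Q)$, exactly the oscillatory-case analysis carried out in the proof of Lemma~\ref{lm5}.

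For the \emph{asymptotic limit}~\eqref{LimL}, I would argue that since $\rho_+>\rho^*$ is a stable asymptote, Lemma~\ref{lm:2}(i) fixes the period as $t_p=\ell/\bar f$ with $\bar f=f_+(\rho_+)$. To identify the limit as $x\to-\infty$, I would first confirm that the solution enters one of the invariant regions of Lemma~\ref{lm5}: on the interval $I=[y,y^\sharp]$ with $y^\sharp\le0$ where the equation has already reverted to the constant-$V_-$ form~\eqref{DW}, I would check whether $f_-(Q)>\bar f$ with $Q>\rho_-$, or the reverse inequalities, hold; Lemma~\ref{lm5} then propagates that condition to all $x\le y$ and delivers $\lim_{x\to-\infty}Q(x)=\rho_-$ with $\rho_-<\rho^*$ the stable root of $f_-(\rho_-)=\bar f$. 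The constraint $\rho_1^+\le W(0)\le\rho_2^-$ is precisely what guarantees the initial data feeds into the correct invariant region.

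Finally, for the \emph{boundary value problem}, piecing $Q(x)$ on $x<0$ together with $Q(x)=W(x)$ on $x\ge0$ gives a continuous solution of~\eqref{DQ-2}; continuity at $x=0$ is automatic from the method-of-steps construction, and the two boundary conditions~\eqref{bc2} follow from~\eqref{Qc} and~\eqref{LimL}. Letting $W(0)$ range over $[\rho_1^+,\rho_2^-]$ and invoking Lemma~\ref{lm3} to guarantee that distinct choices yield non-intersecting and hence distinct profiles, I obtain the claimed infinite family. The cleanest organisation is thus: transversality $\Rightarrow$ well-posedness; invariant regions (Lemma~\ref{lm5}) $\Rightarrow$ the left-asymptote~\eqref{LimL}; and the parametrisation by $W(0)$ $\Rightarrow$ infinitely many profiles — with the monotonicity failure absorbed entirely into the Lemma~\ref{lm5} argument rather than claimed as a separate property.
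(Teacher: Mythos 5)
Your overall architecture (transversality for well-posedness, averaged monotonicity and invariant regions for the left asymptote, Lemma~\ref{lm3} for the non-intersecting family) matches the paper's, and your transversality computation along $\mathcal{C}_1$ is correct. But there is a genuine gap at the core: you propose to absorb the loss of monotonicity ``entirely into the Lemma~\ref{lm5} argument,'' and this cannot work on the first interval $I_{-1}=(z_{-1},0)$, where the car is at $x<0$ but its leader is at $x^\sharp>0$. On that interval the mixed equation (second line of \eqref{DQ-2}) is in force, $k$ is not constant on $[x,x^\sharp]$, and therefore neither the averaging identity \eqref{ss5} nor Lemma~\ref{lm5} (whose hypotheses require $y^\sharp\le 0$) applies. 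This is exactly where the paper does its new work: it first shows $Q'(0-)>0$ using $\bar f\le f_-(Q(0))$, which is precisely where the bound $Q(0)\le\rho_2^-$ in \eqref{Qc} enters; it then excludes local maxima on $I_{-1}$ by comparing the leaders of a putative rightmost maximum and of a point to its right; and it proves $Q(z_{-1})<Q(0)$ via the general identity \eqref{ss-a}, splitting the integral at $x=0$. Only after these steps does one know $0<Q(z_{-1})<\rho_2^-$, which is what allows the $M_k$/envelope machinery to take over on $x\le z_{-1}$. Your proposal establishes none of this, and the omission is not cosmetic: as the paper's remark following the theorem shows, if $Q(0)=\rho_+>\rho_2^-$ then $Q'(0-)<0$ and the backward solution blows up at finite $x$, so an argument tied to the upper bound $Q(0)\le\rho_2^-$ is indispensable. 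Your sentence that the constraint $\rho_1^+\le W(0)\le\rho_2^-$ ``guarantees the initial data feeds into the correct invariant region'' is the right intuition, but it is asserted rather than proved.

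Two secondary omissions: the paper first proves the result under the stronger hypothesis $\rho_-\le Q(0)\le\rho_2^-$ and then relaxes to $\rho_1^+\le Q(0)$ by observing that any such profile lies below the extremal profile with $Q(0)=\rho_2^-$; and in the oscillatory case it needs both an upper envelope from the maxima $M_k$ and a symmetric lower envelope from the minima, followed by a squeeze, whereas you mention only the maxima. These are fixable, but the missing analysis on $I_{-1}$ is a real gap.
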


\begin{proof}
This Theorem is the counter part of Theorem~\ref{tm:IC}
and Corollary~\ref{cor:1} for Case 1A, 
but the proof here is much more involving due to the lack of monotonicity. 
See Figure~\ref{fig:2A}. 


Let the initial data be given on $x\ge0$ as some horizontal shift of $W(x)$
such that~\eqref{Qc} holds. 
Denote by $Q(x)$ the solution for this initial value problem, 
solved backward in $x$ for $x<0$.
Then $Q(x)$ is monotone  on $x\ge 0$ with $Q'(x) >  0$. 
Let $\{z_i\}$ be a car position distribution generated by $Q(x)$ 
with $z_0=0$ and 
\[
 z_k+\frac{\ell}{Q(z_k)} = z_{k+1}, \qquad \forall k\in\mathbb{Z}.
 \]
We also denote the intervals 
\[
I_k\;\dot=\; (z_{k}, z_{k+1}), \qquad  \mbox{for} ~~ k\in\mathbb{Z}.
\]
Throughout the rest of the proof, we use the simplified notations, for any index $k$, 
\bel{nota} 
Q_k=Q(z_k), 
\qquad 
\phi_k=\phi(Q(z_k)). 
\eeq

The proof takes several steps. 
\bigskip

\textbf{Step 1.} 
Assume that 
$Q(x)$ is a solution of the initial value problem, with the additional condition
\bel{Qcn}
\rho_- \le Q_0 \le \rho_2^-. 
\eeq
We claim that  
\bel{c-1}
Q'(0-) >0.
\eeq
Indeed, 
since $Q'(x)>0$ for $x>0$, by~\eqref{ss5} we have 
\bel{s7a}
\frac{1}{\phi_1}-\frac{1}{\phi_0} > Q_0 V_+ \left[ \frac{1}{\bar f} 
-\frac{1}{f_+(Q_0)}\right].
\eeq
By using $\bar f \le V_- Q_0 \phi_0$ and~\eqref{s7a}, we get 
\begin{eqnarray}
 \frac{1}{V_+\phi_1} 
-\frac{1}{V_- \phi_0}
 &=&  \frac{1}{V_+} 
\left[\frac{1}{\phi_1}  - \frac{1}{\phi_0}\right] + 
\frac{1}{V_+\phi_0} -\frac{1}{V_- \phi_0} \nonumber\\
& > &
Q_0 \left[\frac{1}{\bar f}  - \frac{1}{f_+(Q_0)}\right]+ 
\frac{1}{V_+\phi_0} -\frac{1}{V_- \phi_0} 
~\ge~ 0.\label{s7c}
\end{eqnarray}
Equation~\eqref{DQ-2} leads to
\[
 Q'(0-) = \frac{Q_0^2 V_+ \phi_1}{\ell } \left[ \frac{1}{V_+\phi_1} 
-\frac{1}{V_- \phi_0}\right] > 0,
\]
proving~\eqref{c-1}.

\textbf{Step 2.} 
We claim that on the interval $I_{-1}$ there doesn't exist any local maximum. 
Indeed, assume local maxima exist on $I_{-1}$, and let $y_1$ be the right most
local maximum, with $Q'(y_1)=0$. 
Let $y_1^\sharp >0$ be its leader. 
By~\eqref{DQ-2} and $Q'(y_1)=0$, we get
\bel{ss}
  V_- \phi(Q(y_1)) = V_+ \phi\left(Q(y_1^\sharp)\right). 
\eeq
Moreover,  there exists a point $y_2$, such that 
\[
 y_1<y_2<0, \qquad Q(y_2) < Q(y_1), \qquad Q'(y_2) <0.
 \]
Let $y_2^\sharp >0$ be its leader, where
$ y_2^\sharp > y_1^\sharp >0$.
Since $Q'(x)>0$ on $x>0$, we must have
\bel{ss2}
Q(y_2^\sharp) > Q(y_1^\sharp)  \qquad \Rightarrow \qquad
\phi\left(Q(y_2^\sharp)\right) <\phi\left( Q(y_1^\sharp)\right).
\eeq
On the other hand, by~\eqref{DQ-2} and $Q'(y_2)<0$, we get
\[
V_+ \phi\left(Q(y_2^\sharp)\right) > 
V_- \phi(Q(y_2)) > V_- \phi(Q(y_1)) 
= V_+ \phi\left(Q(y_1^\sharp)\right), 
\]
a contradiction to~\eqref{ss2}.

\bigskip

\textbf{Step 3.} 
We now show that, if~\eqref{Qcn} holds, then
\bel{s8a}
  Q_{-1} < Q_0.
\eeq 
Indeed, we know that there are no local maxima on $I_{-1}$ and 
$Q'(0-)>0$. If $Q(x)$ is monotone increasing on $I_{-1}$,
then~\eqref{s8a}  trivially holds.
Now consider the case where $Q(x)$  has a local minimum.
We prove by contradiction. Assume  
that there exist a point $y \in(z_{-1}, 0)$ where
\[
 Q(y) = Q(0)=Q_0, \qquad 
Q(x) < Q_0 \quad \mbox{for } x\in(y,0). 
\]
Let $y^\sharp$ be its leader, where $0<y^\sharp<z_1$. 
Recall~\eqref{ss-a}, we have
\begin{eqnarray*}
&&\hspace{-1cm} \int_y^{y^\sharp} 
\left[ \frac{1}{k(z) \phi(Q(z))} -\frac{1}{V_- \phi(Q(y))} \right]dz
~= ~
\int_y^{y^\sharp} 
\left[ \frac{1}{k(z) \phi(Q(z))} -\frac{1}{V_- \phi_0} \right]dz\\
&=& \frac{\ell}{\bar f} - \frac{\ell}{f_-(Q(y))} 
~=~ \frac{\ell}{\bar f} - \frac{\ell}{f_-(Q_0)}  \;\dot=\; \gamma ~\ge~0,
\end{eqnarray*}
which gives
\[
\gamma= \int_y^0 \left[\frac{1}{V_-\phi(Q(z))} -\frac{1}{V_- \phi_0} \right] dz
+ \int_0^{y^\sharp} \left[\frac{1}{V_+\phi(Q(z))} -\frac{1}{V_- \phi_0}  \right]dz.
\]

Since the first integrand on the right hand side is strictly negative, we get
\bel{s8aa}
\int_0^{y^\sharp} \left[\frac{1}{V_+\phi(Q(z))} -\frac{1}{V_- \phi_0}  \right]dz>\gamma.
\eeq
But \eqref{s8aa} is not possible. 
Indeed, since $Q'(x)>0$ on $x>0$, the mapping 
$x\mapsto (1/\phi(Q(x)))$ is increasing. 
Using  that 
\[
\frac{1}{V_+ \phi(Q_0)} - \frac{1}{V_- \phi(Q_0)} <0,
\qquad 
\int_0^{z_1}  \left[\frac{1}{V_+ \phi(Q(z))} - \frac{1}{V_- \phi(Q_0)} \right] dz
= \gamma,
\]
one reaches
\[
\int_0^{x}  \left[\frac{1}{V_+ \phi(Q(z))} - \frac{1}{V_- \phi(Q_0)} \right] dz
< \gamma,\qquad \mbox{for any } ~ x\in(0,z_1), 
\]
 a contradiction to~\eqref{s8aa}.

\bigskip

\textbf{Step 4.} 
We now have that, for the initial value problem with initial data $W(x)$ 
on $x\ge 0$ satisfying~\eqref{Qcn}, the solution $Q(x)$, 
defined on $x<0$,  satisfies 
\bel{s4aa}
0 < Q(z_{-1}) < \rho_2^-.
\eeq
We now claim that there exists a unique solution $Q(x)$ 
for the initial value problem, which satisfies
\bel{s4bb}
\lim_{x\to-\infty} Q(x) = \rho_-.
\eeq

Indeed, if $Q(x)$ stays on one side of $\rho_-$ 
on an interval $I_k$ for some index $k\le -2$, 
then Lemma~\ref{lm5} provides the results. 
Now consider the case the $Q(x)$ is oscillatory and crosses $\rho_-$ 
at least once on each interval $I_k$, for $k\le -2$. 
We apply a similar argument as the proof for Lemma~\ref{lm5}. 
Let 
\[
M_k = \max\left\{ \max_{x\in I_k} \frac{1}{\phi(Q(x))},  \frac{1}{\phi(\rho_-)} \right\} .
\]
Then, we have, for some index $k\le 2$,  
\[
M_k = \frac{1}{\phi(Q(y_k))} > \frac{1}{\phi(\rho_-)}, \qquad \mbox{where}~~
y_k\in I_k \quad \mbox{and}~~Q'(y_k)=0. 
\]
Let $y_k^\sharp=y_k + \ell/Q(y_k)$ denote the position of
the leader for the car at $y_k$. 
By Lemma~\ref{lm4.5} we have $y_k^\sharp\in I_{k+1}$. 
Then $Q'(y_k)=0$ implies that $Q(y_k)=Q(y_k^\sharp)$, 
and~\eqref{ss5} implies 
\[
M_{k+1}-M_k  \ge V_- Q(y_k) 
\left[ 
\frac{1}{f_-(\rho_-)} - \frac{1}{f_-(Q(y_k))}
\right]
= \mathcal{O}(1) \cdot (Q(y_k)-\rho_-).
\]
Thus, we conclude that 
\[ \lim_{k\to-\infty} Q(y_k)=\rho_-, \quad \mbox{and}\quad 
  \lim_{k\to-\infty} M_k = \frac{1}{\phi(\rho_-)}.\] 
Therefore on $x\le 0$ there exists an upper envelope $E^\sharp (x)$ for $Q(x)$, such that
\bel{eq:E}
 Q(x) \le E^\sharp(x), \qquad \lim_{x\to-\infty} E^\sharp(x)=\rho_-.
\eeq

A symmetrical argument for the local minima below $\rho_-$ leads to a lower envelope
$E^\flat(x)$ on $x<0$ for $Q(x)$, with 
\bel{eq:EE}
  E^\flat (x) < \rho_-, \qquad \lim_{x\to-\infty}E^\flat(x) =\rho_-.
\eeq
The result~\eqref{s4bb} follows from a squeezing argument.
Finally, the uniqueness of the solution follows from the 
transversality properties~\eqref{eq:cc1}-\eqref{eq:cc}, see~\cite{MR964856}. 

Piecing together the solution $Q(x)$ on $x<0$ with the initial data 
$Q(x)=W(x)$ on 
$x\ge 0$,  we obtain a stationary profile, calling it again by $Q(x)$ for 
$x\in\mathbb{R}$, that satisfies the DDDE~\eqref{DQ-2} and the boundary 
conditions~\eqref{bc2}. 
Thus, we obtain infinitely many profiles for $Q(x)$, one for
 each $Q(0)$ value satisfying~\eqref{Qcn}.

\bigskip

\textbf{Step 5.}
Denote by  $Q^\sharp(x)$ the unique profile with $Q^\sharp(0)=\rho_2^-$. 
By Step 3, we have 
\[
0< Q^\sharp(z_{-1}) < Q^\sharp(0) =\rho_2^-.
\]

We now relax the condition~\eqref{Qcn} on $Q(0)$ to~\eqref{Qc}, i.e, 
  $ \rho_1^+ < Q(0) < \rho_2^-$.  
Indeed, any profile $Q(x)$ with $ \rho_1^+ < Q(0) < \rho_2^-$ will lie below 
$Q^\sharp(x)$, with
\[
0<Q(z_{-1})< \rho_2^-.
\]
By Step 4, such a profile satisfies the boundary condition~\eqref{s4bb}, 
completing the proof.
\end{proof}

\begin{remark}
We remark on the bound~\eqref{Qc}, in particular the 
upper bound $Q_0 \le \rho_2^-$, which is different from
Case~1A in section~3.1.  
First, we show that the constant solution 
$Q(x)\equiv \rho_+$  on $x\ge 0$ is not valid. 
Indeed, with $Q_0=Q_1=\rho_+$, we have 
\[Q'(0-)  = \frac{\rho_+^2}{\ell V_- \phi(\rho_+)} (V_--V_+) \phi(\rho_+) <0.\]
Then, on the interval $I_{-1}=[z_{-1},z_0]$, 
$V_- \phi(Q(x)) < V_+\phi(\rho_+)$, so $Q'(x)<0$.  
By induction argument one concludes that $Q'(x)<0$ for $x<0$. 
In fact, numerical simulation shows that $Q(x)$ blows up to $\infty$
at finite $\bar x <0$ as $x$ goes backwards. 

With the upper bound $Q_0 \le \rho_2^-$ we have~\eqref{c-1}, 
and we ensure that $Q(x) < \rho_2^-$ on $x<0$, 
and consequently the asymptotic limit of $\rho_-$ as $x\to-\infty$. 
It is possible that this upper bound could be somewhat relaxed,
but a sharp bound is difficult to find. 
\end{remark}

Sample profiles of $Q(x)$ are plotted in Figure~\ref{fig:2A} plot (2),
where we observe that the profiles are not monotone.
We also plot multiple viscous profiles $\rho^\ve(x)$
in Figure~\ref{fig:2A} plot (3), as a comparison. 
Note that if $\rho^\ve(0) \in(\rho_-,\rho_+)$,
the viscous profiles are monotone, a property 
not preserved by $Q(x)$.

\begin{figure}[htb]
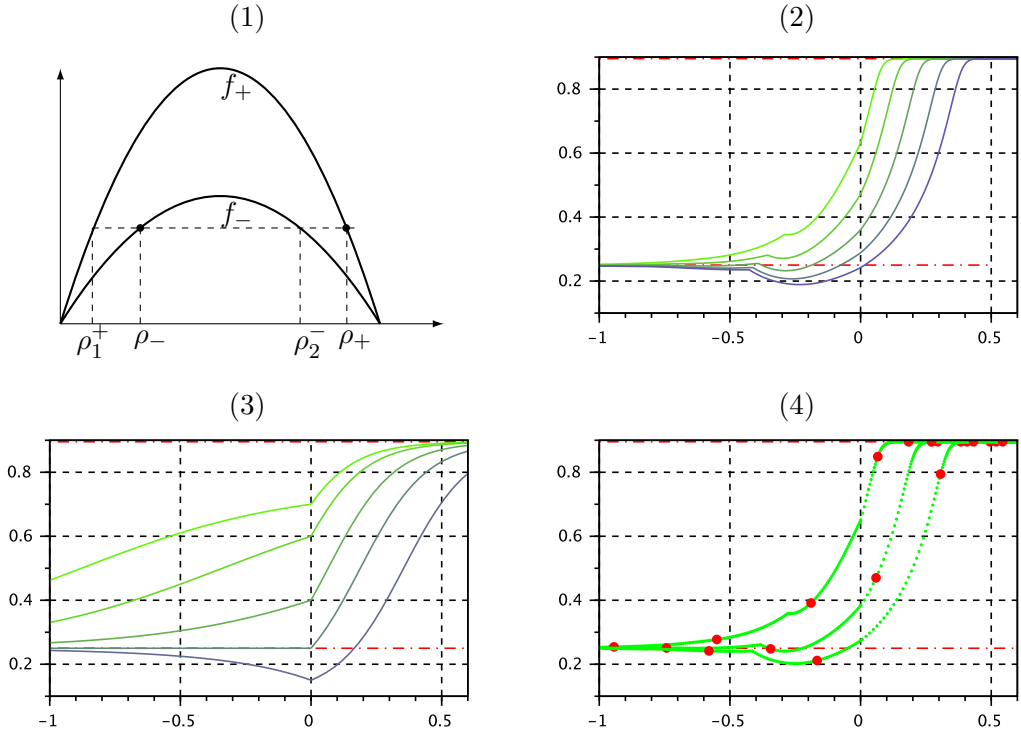

\begin{center}
\begin{tabular}{cc}
(1) & (2) \\
\setlength{\unitlength}{0.85mm}
\begin{picture}(84,50)(-12,-5)  
\put(0,0){\vector(1,0){60}}
\put(0,0){\vector(0,1){40}}
\multiput(5,15)(2,0){21}{\line(1,0){1}}
\put(12.5,15){\circle*{1.2}}\put(44.7,15){\circle*{1.2}}
\multiput(44.8,15)(0,-2){8}{\line(0,-1){1}}\put(42,-3){ $\rho_+$}
\multiput(12.5,15)(0,-2){8}{\line(0,-1){1}}\put(10,-3){ $\rho_-$}
\multiput(37.5,15)(0,-2){8}{\line(0,-1){1}}\put(35,-4){ $\rho_2^-$}
\multiput(5,15)(0,-2){8}{\line(0,-1){1}}\put(1,-4){ $\rho_1^+$}
\put(25,16){$f_-$}\put(25,36){$f_+$}
\thicklines
\qbezier(0,0)(25,40)(50,0)
\qbezier(0,0)(25,80)(50,0)
\end{picture}
&
\includegraphics[width=6.5cm,clip,trim=5mm 0mm 10mm 6mm]{DDDE2A.pdf}\\[3mm]
(3) & (4) \\
\includegraphics[width=6.5cm,clip,trim=5mm 0mm 10mm 6mm]{vis2a.pdf}&
\includegraphics[width=6.5cm,clip,trim=5mm 0mm 10mm 6mm]{FtL-2a.pdf}
\end{tabular}
\caption{Case 2A. (1) Flux functions and the locations of $\rho_-,\rho_+$; 
(2) Plots of various profiles of $Q(x)$, with different values of $Q(0)$; 
(3) Plots of various viscous traveling waves $\rho^\ve(x)$, with different values
of $\rho^\ve(0)$; 
(4) Plots of various solutions of the FtL model $\{z_i(t), \rho_i(t)\}$,  with 3 different initial Riemann data. 
Here the thick dots denote the locations of cars at $t=2$.}
\label{fig:2A}
\end{center}
\end{figure}

\medskip

\textbf{Local Stability of the Profiles.} 
Let $Q^\sharp(x)$ be the profile with 
$Q^\sharp(0)=\rho_2^-$,
and let $Q^\flat(x)$ be the limit profile as $Q(0) \to \rho_1^+$.
Similar to Case 1A, we define a basin of attraction $D$
as~\eqref{def:D}. 
All profiles lie in $D$, and they do not intersect with 
each other. 
Parametrizing the region with these profiles, as in Theorem~\ref{tm:att},
we get the same local stability property. 
We skip the details. 

\medskip
Again,  numerical simulations are performed for the FtL model for Case 2A, 
and we plot the solutions with ``Riemann initial data''~\eqref{eq:rmzr}. 
See Figure~\ref{fig:2A} plot (4).
We see the clear convergence to a certain profile for each choice of 
initial data.

\subsection{Case 2B:  $ 0<\rho_+<\rho_-<\rho^*$.}

This is similar to Case 1B. 
Since $\rho_+$ is an unstable asymptote for $x\to\infty$, 
we must have $Q(x)\equiv \rho_+$ on $x\ge 0$.  
Using this as the initial data, one can solve $Q(x)$ backward in $x$.
Since $\rho_-$ is a stable asymptote, we have $Q(x)\to \rho_-$
as $x\to-\infty$. 
Thus there exists a unique monotone profile $Q(x)$.  
For the same reason as for case 1B, this profile 
is not a local attractor for the  solutions of the FtL model. 

In Figure~\ref{fig:2B} we plot the profile $Q(x)$ in plot (2), 
the viscous profile $\rho^\ve(x)$ in plot (3), 
and the solution of the FtL model with ``Riemann initial data'' in plot (4). 
Note that a perturbation enters the region $x>0$, even with initial Riemann
data, indicating the instability of the profile $Q(x)$. 

\begin{figure}[htbp]
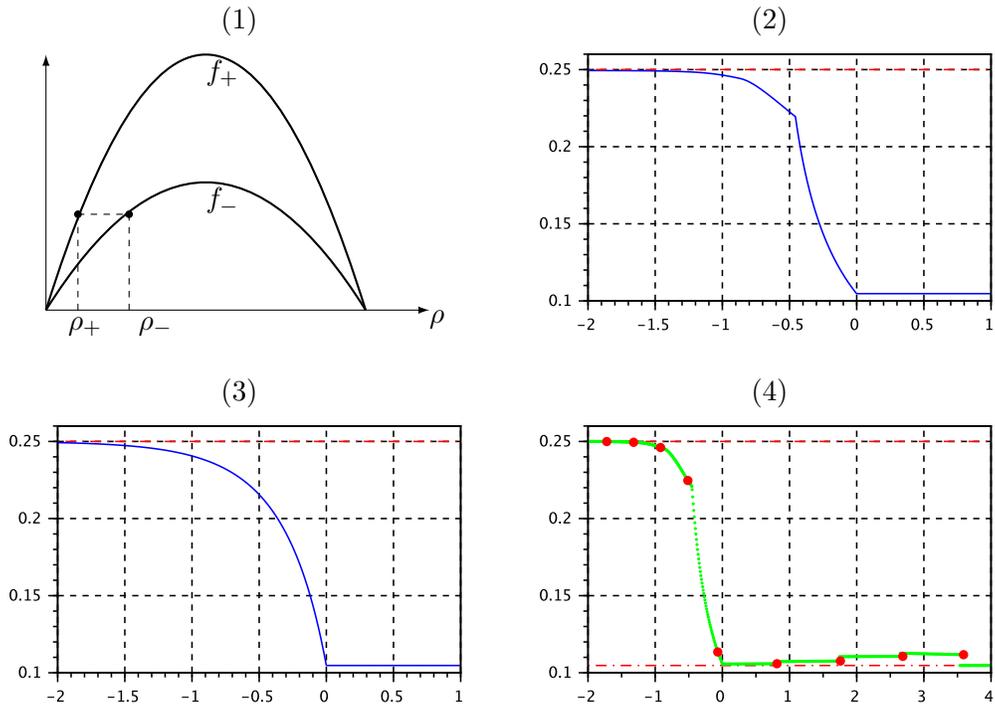

\begin{center}
\begin{tabular}{cc}
(1) & (2) \\
\setlength{\unitlength}{0.85mm}
\begin{picture}(78,40)(-8,-5)  
\put(0,0){\vector(1,0){60}}\put(60,-2){$\rho$}
\put(0,0){\vector(0,1){40}}
\multiput(5,15)(2,0){4}{\line(1,0){1}}
\multiput(5,15)(0,-2){8}{\line(0,-1){1}}\put(2,-3){ $\rho_+$}
\multiput(13,15)(0,-2){8}{\line(0,-1){1}}\put(13,-3){ $\rho_-$}
\put(25,16){$f_-$}\put(25,36){$f_+$}
\put(5,15){\circle*{1.2}}\put(13,15){\circle*{1.2}}
\thicklines
\qbezier(0,0)(25,40)(50,0)
\qbezier(0,0)(25,80)(50,0)
\end{picture} &
\includegraphics[width=6.5cm,clip,trim=1mm 0mm 10mm 6mm]{DDDE2B.pdf}\\[3mm]
(3) & (4) \\
\includegraphics[width=6.5cm,clip,trim=1mm 0mm 10mm 6mm]{vis2b.pdf} &
\includegraphics[width=6.5cm,clip,trim=1mm 0mm 10mm 6mm]{FtL-2b.pdf}
\end{tabular}
\caption{Case 2B.
(1) Flux functions and the locations of $\rho_-,\rho_+$; 
(2) Plots of the unique profile of $Q(x)$, with  $Q(0)=\rho_+$; 
(3) Plots of various viscous traveling waves $\rho^\ve(x)$, with 
 $\rho^\ve(0)=\rho_+$; 
(4) Plots of the solution of the FtL model $\{z_i(t), \rho_i(t)\}$
  with a  Riemann initial data. 
Here the thick dots denote the locations of cars at $t=2$.
}
\label{fig:2B}
\end{center}
\end{figure}

\subsection{Case 2C: $\rho^* \le \rho_-  <  \rho_+<1$.}

This is the corresponding sub-case as for Case 1C. 
With the same argument, one concludes that there doesn't exist any profile 
$Q(x)$, although a viscous profile $\rho^\ve(x)$ does exist.  
See Figure~\ref{fig:2C}  plot (2). 
The solution of the FtL model in Figure~\ref{fig:2C} plot (3) 
demonstrates severe oscillation on $x<0$ which never settles as $t$ grows.

\begin{figure}[htbp]
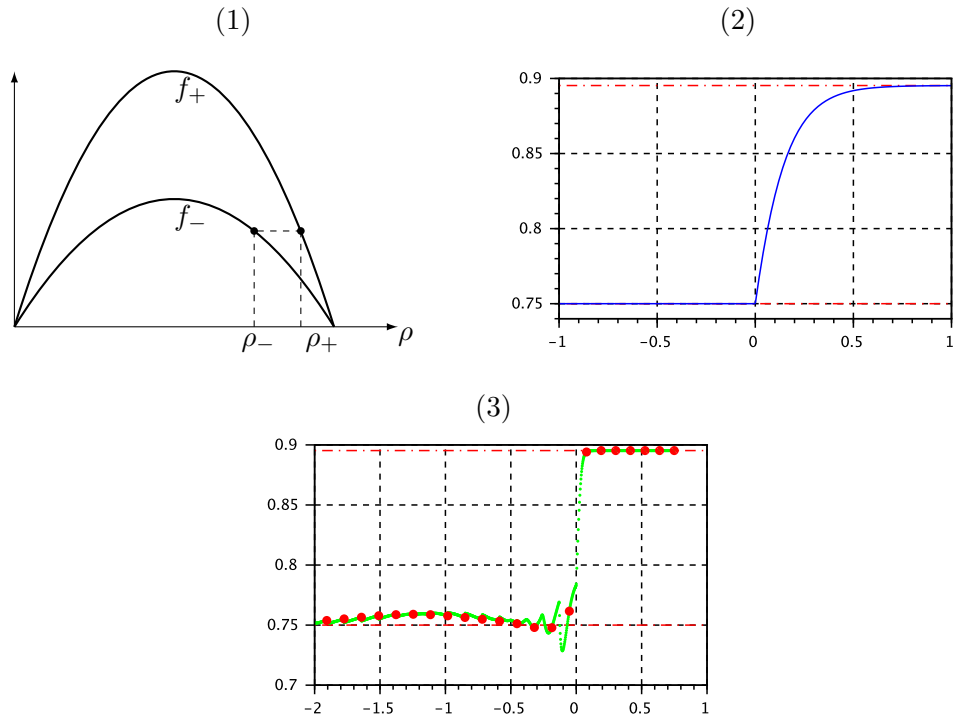

\begin{center}
\begin{tabular}{cc}
(1) & (2) \\
\setlength{\unitlength}{0.85mm}
\begin{picture}(70,50)(0,-5)  
\put(0,0){\vector(1,0){60}}\put(60,-2){$\rho$}
\put(0,0){\vector(0,1){40}}
\multiput(38,15)(2,0){4}{\line(1,0){1}}
\multiput(44.8,15)(0,-2){8}{\line(0,-1){1}}\put(44,-3){ $\rho_+$}
\multiput(37.5,15)(0,-2){8}{\line(0,-1){1}}\put(34,-3){ $\rho_-$}
\put(25,16){$f_-$}\put(25,36){$f_+$}
\put(44.8,15){\circle*{1.2}}\put(37.5,15){\circle*{1.2}}
\thicklines
\qbezier(0,0)(25,40)(50,0)
\qbezier(0,0)(25,80)(50,0)
\end{picture} &
\includegraphics[width=6.5cm,clip,trim=0mm 0mm 8mm 5mm]{vis2c.pdf}
\end{tabular}\\[3mm]
(3) \\
\includegraphics[width=6.5cm,clip,trim=0mm 0mm 8mm 5mm]{FtL-2c.pdf}
\caption{Case 2C. 
(1) Plot of the flux functions $f_-,f_+$ 
and the locations of $\rho_-,\rho_+$; 
(2) Plot of the  unique viscous profile $\rho^\ve$ with 
$\rho^\ve(0)=\rho_-$; 
(3)   Plot of the solution of the FtL model $\{z_i(t), \rho_i(t)\}$ with  a 
Riemann initial data. Here the thick dots are the locations
of cars at $t=2$.} 
\label{fig:2C}
\end{center}
\end{figure}

\subsection{Case 2D:  $ 0<  \rho_+<\rho^*\le \rho_-<1$.}
For this case, we have neither the profile $Q(x)$ nor the 
viscous profile $\rho^\ve(x)$. 
We plot a solution of the FtL model in Figure~\ref{fig:2D}, 
with ``Riemann initial data''.
We see that the solution of the FtL model
 doesn't converge to any limit as time grows. 

\begin{figure}[htbp]
\begin{center}
\setlength{\unitlength}{0.85mm}
\begin{picture}(70,50)(0,-5)  
\put(0,0){\vector(1,0){60}}\put(60,-2){$\rho$}
\put(0,0){\vector(0,1){40}}
\multiput(0,15)(2,0){24}{\line(1,0){1}}
\multiput(5.2,15)(0,-2){8}{\line(0,-1){1}}\put(3,-3){ $\rho_+$}
\multiput(37.5,15)(0,-2){8}{\line(0,-1){1}}\put(34,-3){ $\rho_-$}
\put(25,16){$f_-$}\put(25,36){$f_+$}
\put(5.2,15){\circle*{1.2}}\put(37.5,15){\circle*{1.2}}
\thicklines
\qbezier(0,0)(25,40)(50,0)
\qbezier(0,0)(25,80)(50,0)
\end{picture}
\includegraphics[width=7cm,clip,trim=0mm 0mm 8mm 5mm]{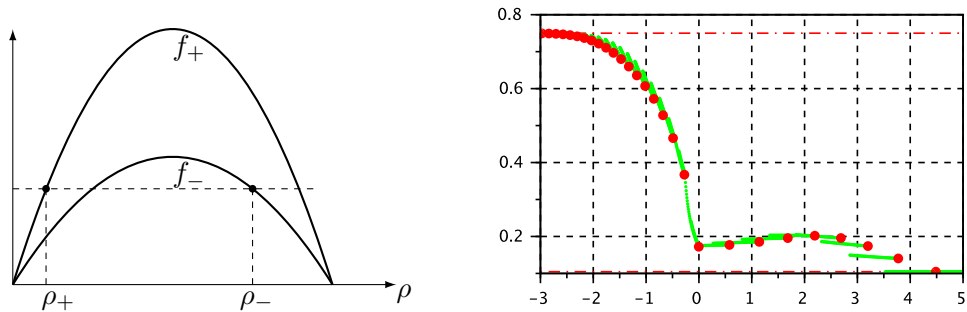}
\caption{Case 2D. Left: Plots of the flux functions and the locations of $\rho_-,\rho_+$; 
Right: Plot of the solution of the FtL model $\{z_i(t), \rho_i(t)\}$ with  a 
Riemann initial data. Here the thick dots are the locations
of cars at $t=2$.}
\label{fig:2D}
\end{center}
\end{figure}

\section{A Numerical Simulation}

We perform  numerical simulation to obtain approximate solution
for the FtL model, with ``Riemann'' initial data $(\rho^L,\rho^R)$ such that
\[
\rho_i(0) = \begin{cases}
\rho^R,  \quad & i\ge 0, \\
\rho^L,  \quad & i< 0 ,
\end{cases}
\qquad
z_i(0) = \begin{cases}
i \ell/\rho^R,  \quad & i\ge 0, \\
i \ell/\rho^L,  \quad & i< 0 ,
\end{cases} 
\qquad z_0(0)=0. 
\]
We choose values of $(\rho^L,\rho^R)$ such that
\[ f_-(\rho^L) \not= f_+(\rho^R).\]
We use 
\[
 \phi(\rho)=1-\rho, \quad
(V_-, V_+)=(2,1), \quad 
\rho^L= 0.6, \quad \rho^R=0.7,\qquad \ell=0.01.
\]

The flux functions $f_-,f_+$ and the locations of $\rho^{L,R}$ 
are illustrated in Figure~\ref{fig:Ex1} plot (1),
while the solution $\{z_i(T),\rho_i(T),\}$ of the FtL model 
is shown in plot (2).
As a comparison, we also simulate the viscous conservation law
\[
 \rho_t + f(k(x),\rho)_x = \ve \rho_{xx}, 
 \]
using the same Riemann data, with $\ve= 0.02$ and $k(x)$
the jump function~\eqref{eq:V0}. 
The result is shown in plot (3).

\begin{figure}[htb]
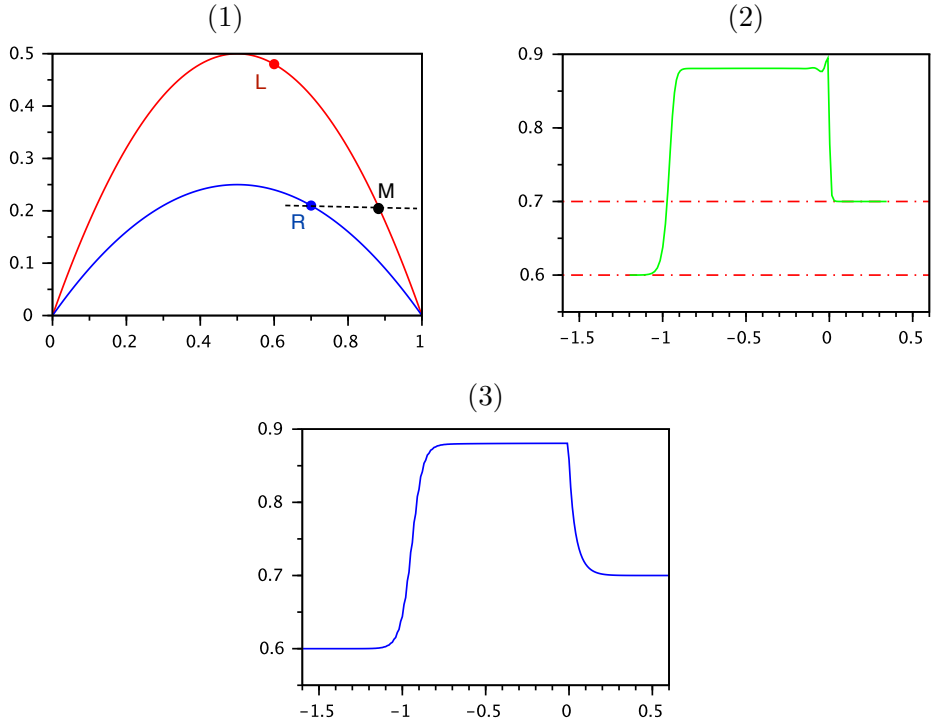

\begin{center}
\begin{tabular}{cc}
(1)&(2)\\
\includegraphics[width=6.5cm,clip,trim=0mm 6.8cm 10mm 77mm]{RPt1Flux-pdf}
&
\includegraphics[width=6.5cm,clip,trim=0mm 0cm 0mm 6mm]{RPtest1.pdf}
\end{tabular} \\[2mm]
(3) \\
\includegraphics[width=6.5cm,clip,trim=0mm 0cm 0mm 6mm]{RP-visco1.pdf}
\caption{(1). Plots of the flux functions and the location of the left (L), right (R) and middle (M) states in the solution of the Riemann problem; 
(2).  Numerical simulation results $\{z_i(t), \rho_i(t)\}$ 
with FtL model with Riemann initial data, at $t=1$; 
(3) Numerical simulation results $\rho^\ve(t)$ for  
the viscous conservation law at $t=1$, with the same Riemann initial data.}
\label{fig:Ex1}
\end{center}
\end{figure}

The vanishing viscosity limit solution for the conservation law~\eqref{claw} 
consists of a shock with negative speed from L to M, and a stationary
jump from M to R. 
The solution of the FtL model captures this main feature. 
However, due to the instability of the path M-R (where the left state is unstable), 
we observe oscillations behind the stationary jump at $x=0$. 
We remark that 
the solution of the viscous conservation law with the same initial data
does not contain oscillation behind $x=0$.

\section{Concluding Remarks}

In this paper we derive a discontinuous delay differential equation 
for the stationary traveling wave profile for 
an ODE model of traffic flow, 
where the road condition is discontinuous. 
For various cases, we obtain results on the existence, uniqueness and 
local stability of the profiles. 

These results offer alternative approximate solutions to the 
scalar conservation law with discontinuous flux,
as a counter part to the classical vanishing viscosity approach. 
The stabilizing effect of the viscosity is not entirely present in the 
FtL model, where oscillations are observed behind the discontinuity 
in the road condition.
This is caused by the ``directional'' influence in real life traffic,
where the drivers adjust their behavior only according to situations ahead of them,
not what is behind.
Heuristically, this fact contributes to the ``lack of viscosity'' behind the jump at $x=0$,
and thus the oscillations. 

The natural followup work is to investigate the convergence of solutions of the FtL model,
under suitable assumptions, to some entropy admissible solution of the scalar conservation law with
discontinuous flux.  We expect this to be a challenging task, 
due to the non-monotone profiles and oscillations behind the jump in the road condition. 

One may criticize the FtL model used here of being too simple,
especially around the jump in the road condition, 
where the drivers change their speeds suddenly as they cross $x=0$. 
The model is a first order approximation where one assumes instant acceleration.
A high order model, where the acceleration is finite, might smooth out
the behavior near $x=0$ and remove the oscillations.
However, such model would take the velocities of the cars as unknowns, 
and thus become much more complex. 

\bigskip


\bibsection \begin{biblist}[\small]

\bib{MR3416038}{article}{
   author={Andreianov, B.},
   title={New approaches to describing admissibility of solutions of scalar
   conservation laws with discontinuous flux},
   conference={
      title={CANUM 2014---42e Congr\`es National d'Analyse Num\'erique},
   },
   book={
      series={ESAIM Proc. Surveys},
      volume={50},
      publisher={EDP Sci., Les Ulis},
   },
   date={2015},
   pages={40--65},
   review={\MR{3416038}},
   doi={10.1051/proc/201550003},
}

\bib{MR2727134}{article}{
   author={Aubin, J.-P.},
   title={Macroscopic traffic models: shifting from densities to
   ``celerities''},
   journal={Appl. Math. Comput.},
   volume={217},
   date={2010},
   number={3},
   pages={963--971},
   issn={0096-3003},
   review={\MR{2727134}},
   doi={10.1016/j.amc.2010.02.032},
}

\bib{MR3253235}{article}{
   author={Bellomo, N.},
   author={Bellouquid, A.},
   author={Nieto, J.},
   author={Soler, J.},   
   title={On the multiscale modeling of vehicular traffic: from kinetic to
   hydrodynamics},
   journal={Discrete Contin. Dyn. Syst. Ser. B},
   volume={19},
   date={2014},
   number={7},
   pages={1869--1888},
   issn={1531-3492},
   review={\MR{3253235}},
   doi={10.3934/dcdsb.2014.19.1869},
}

\bib{MR964856}{article}{
   author={Bressan, A.},
   title={Unique solutions for a class of discontinuous differential
   equations},
   journal={Proc. Amer. Math. Soc.},
   volume={104},
   date={1988},
   number={3},
   pages={772--778},
   issn={0002-9939},
   review={\MR{964856}},
   doi={10.2307/2046790},
}

\bib{MR1634652}{article}{
   author={Bressan, A.},
   author={Shen, W.},
   title={Uniqueness for discontinuous ODE and conservation laws},
   journal={Nonlinear Anal.},
   volume={34},
   date={1998},
   number={5},
   pages={637--652},
   issn={0362-546X},
   review={\MR{1634652}},
   doi={10.1016/S0362-546X(97)00590-7},
}

\bib{MR2239406}{article}{
   author={Bressan, A.},
   author={Shen, W.},
   title={Unique solutions of discontinuous O.D.E.'s in Banach spaces},
   journal={Anal. Appl. (Singap.)},
   volume={4},
   date={2006},
   number={3},
   pages={247--262},
   issn={0219-5305},
   review={\MR{2239406}},
   doi={10.1142/S0219530506000772},
}

\bib{MR3217759}{article}{
   author={Colombo, R. M.},
   author={Rossi, E.},
   title={On the micro-macro limit in traffic flow},
   journal={Rend. Semin. Mat. Univ. Padova},
   volume={131},
   date={2014},
   pages={217--235},
   issn={0041-8994},
   review={\MR{3217759}},
   doi={10.4171/RSMUP/131-13},
}

\bib{MR3714974}{article}{
   author={Corli, Andrea},
   author={di Ruvo, Lorenzo},
   author={Malaguti, Luisa},
   author={Rosini, Massimiliano D.},
   title={Traveling waves for degenerate diffusive equations on networks},
   journal={Netw. Heterog. Media},
   volume={12},
   date={2017},
   number={3},
   pages={339--370},
   issn={1556-1801},
   review={\MR{3714974}},
}

\bib{MR3541527}{article}{
   author={Cristiani, E.},
   author={Sahu, S.},
   title={On the micro-to-macro limit for first-order traffic flow models on
   networks},
   journal={Netw. Heterog. Media},
   volume={11},
   date={2016},
   number={3},
   pages={395--413},
   issn={1556-1801},
   review={\MR{3541527}},
   doi={10.3934/nhm.2016002},
}

\bib{MR3356989}{article}{
   author={Di Francesco, M.},
   author={Rosini, M. D.},
   title={Rigorous derivation of nonlinear scalar conservation laws from
   follow-the-leader type models via many particle limit},
   journal={Arch. Ration. Mech. Anal.},
   volume={217},
   date={2015},
   number={3},
   pages={831--871},
   issn={0003-9527},
   review={\MR{3356989}},
   doi={10.1007/s00205-015-0843-4},
}

\bib{MR1028776}{book}{
   author={Filippov, A. F.},
   title={Differential equations with discontinuous righthand sides},
   series={Mathematics and its Applications (Soviet Series)},
   volume={18},
   note={Translated from the Russian},
   publisher={Kluwer Academic Publishers Group, Dordrecht},
   date={1988},
   pages={x+304},
   isbn={90-277-2699-X},
   review={\MR{1028776}},
   doi={10.1007/978-94-015-7793-9},
}

\bib{MR1109304}{article}{
   author={Gimse, T.},
   author={Risebro, N. H.},
   title={Riemann problems with a discontinuous flux function},
   conference={
      title={Third International Conference on Hyperbolic Problems, Vol.\ I,
      II},
      address={Uppsala},
      date={1990},
   },
   book={
      publisher={Studentlitteratur, Lund},
   },
   date={1991},
   pages={488--502},
   review={\MR{1109304}},
}

\bib{GS2016}{article}{
  author={Guerra, G.},
  author={Shen, W.},
   title={Vanishing Viscosity Solutions of  Riemann Problems for Models in Polymer Flooding},
   journal={To appear in ``Partial Differential Equations, Mathematical Physics, and Stochastic Analysis''.
A Volume in Honor of Helge Holden's 60th Birthday. EMS Congress Reports. Editors: F. Gesztesy, H. Hanche-Olsen, E. Jakobsen, Y. Lyubarskii, N. Risebro, and K. Seip. },
   date={2017}
}

\bib{HoldenRisebro}{article}{
   author={Holden, Helge},
   author={Risebro, Nils Henrik},
   title={The continuum limit of Follow-the-Leader models---a short proof},
   journal={Discrete Contin. Dyn. Syst.},
   volume={38},
   date={2018},
   number={2},
   pages={715--722},
   issn={1078-0947},
   review={\MR{3721873}},
}


\bib{HoldenRisebro2}{article}{
   author={Holden, H.},
   author={Risebro, N. H.},
   title={Follow-the-Leader Models can be viewed as a numerical approximation to the Lighthill-Whitham-Richards model for traffic flow},
   journal={Preprint},
   date={2017},
}


\bib{MR0072606}{article}{
   author={Lighthill, M. J.},
   author={Whitham, G. B.},
   title={On kinematic waves. II. A theory of traffic flow on long crowded
   roads},
   journal={Proc. Roy. Soc. London. Ser. A.},
   volume={229},
   date={1955},
   pages={317--345},
   issn={0962-8444},
   review={\MR{0072606}},
   doi={10.1098/rspa.1955.0089},
}

\bib{MR3177735}{article}{
   author={Rossi, E.},
   title={A justification of a LWR model based on a follow the leader
   description},
   journal={Discrete Contin. Dyn. Syst. Ser. S},
   volume={7},
   date={2014},
   number={3},
   pages={579--591},
   issn={1937-1632},
   review={\MR{3177735}},
   doi={10.3934/dcdss.2014.7.579},
}

\bib{MR3663611}{article}{
   author={Shen, W.},
   title={On the uniqueness of vanishing viscosity solutions for Riemann
   problems for polymer flooding},
   journal={NoDEA Nonlinear Differential Equations Appl.},
   volume={24},
   date={2017},
   number={4},
   pages={Art. 37, 25},
   issn={1021-9722},
   review={\MR{3663611}},
   doi={10.1007/s00030-017-0461-y},
}

\bib{ShenWeb}{article}{
  author={Shen, W.},
   title={Scilab codes for simulations and plots used in this paper},
   journal={Web: www.personal.psu.edu/wxs27/SIM/Traffic-DDDE},
}

\bib{ShenKarim2017}{article}{
  author={Shen, W.},
  author={Shikh-Khalil, K.},
   title={Traveling Waves for a Microscopic Model of Traffic Flow},
   journal={Preprint 2017. Accepted for publication in \textit{Discrete and Continuous Dynamical Systems}, 2018.},
}

\end{biblist}

\end{document}